\numberwithin{equation}{section}
\newtheorem{Theorem}{Theorem}[section]
\newtheorem{Corollary}[Theorem]{Corollary}
\newtheorem{Lemma}[Theorem]{Lemma}
\newtheorem{Proposition}[Theorem]{Proposition}
 { \theoremstyle{definition}
\newtheorem{Definition}[Theorem]{Definition}
\newtheorem{Example}[Theorem]{Example}
\newtheorem{Remark}[Theorem]{Remark} }
\newcommand{\half}{\ensuremath{\frac{1}{2}}}
\newcommand{\N}{{\mathbb N}}
\newcommand{\R}{{\mathbb R}}
\newcommand{\Rd}{{{\mathbb R}^{d}}}
\newcommand{\Sdmone}{{{\mathbb S}^{d-1}}}
\newcommand{\HypFunTO}[4]{{
        {_2F_1}\left[ \left.
        \begin{array}{@{}c@{}}
        #1 , #2\\
        #3 \\
        \end{array} \right| #4 \right] }}
 \def\CD{{\mathcal D}}
 \def\CI{{\mathcal I}}
 \def\NN{{\mathbb N}}
 \def\cov{\operatorname{Cov}}
 \def\Lip{\operatorname{Lip}}
 \DeclareMathOperator{\opD}{{{\mathcal D}}} 
 \DeclareMathOperator{\opI}{{{\mathcal I}}} 
\newcommand*{\Cdot}[1][2.0]{
  \mathpalette{\CdotAux{#1}}\cdot%
}
\newdimen\CdotAxis
\newcommand*{\CdotAux}[3]{%
  {%
    \settoheight\CdotAxis{$#2\vcenter{}$}%
    \sbox0{%
      \raisebox\CdotAxis{%
        \scalebox{#1}{%
          \raisebox{-\CdotAxis}{%
            $\mathsurround=0pt #2#3$%
          }%
        }%
      }%
    }%
    \dp0=0pt %
    \sbox2{$#2\bullet$}%
    \ifdim\ht2<\ht0 %
      \ht0=\ht2 %
    \fi
    \sbox2{$\mathsurround=0pt #2#3$}%
    \hbox to \wd2{\hss\usebox{0}\hss}%
  }%
}
\begin{document}

\allowdisplaybreaks

\renewcommand{\thefootnote}{$\star$}

\newcommand{\arXivNumber}{1601.07743}

\renewcommand{\PaperNumber}{043}

\FirstPageHeading

\ShortArticleName{One-Step Recurrences for Stationary Random Fields on the Sphere}

\ArticleName{One-Step Recurrences for Stationary Random Fields\\ on the Sphere\footnote{This paper is a~contribution to the Special Issue
on Orthogonal Polynomials, Special Functions and Applications.
The full collection is available at \href{http://www.emis.de/journals/SIGMA/OPSFA2015.html}{http://www.emis.de/journals/SIGMA/OPSFA2015.html}}}

\Author{R.K.~BEATSON~$^\dag$ and W.~ZU~CASTELL~$^{\ddag\S}$}

\AuthorNameForHeading{R.K.~Beatson and W.~zu~Castell}

\Address{$^\dag$~School of Mathematics and Statistics, University of Canterbury,\\
\hphantom{$^\dag$}~Private Bag 4800, Christchurch, New Zealand}
\EmailD{\href{mailto:r.beatson@math.canterbury.ac.nz}{r.beatson@math.canterbury.ac.nz}}
\URLaddressD{\url{http://www.math.canterbury.ac.nz/~r.beatson}}

\Address{$^\ddag$~Scientific Computing Research Unit, Helmholtz Zentrum M\"{u}nchen,\\
\hphantom{$^\ddag$}~Ingolst\"{a}dter Landstra{\ss}e~1, 85764 Neuherberg, Germany}
\EmailD{\href{mailto:castell@helmholtz-muenchen.de}{castell@helmholtz-muenchen.de}}
\URLaddressD{\url{http://www.helmholtz-muenchen.de/asc}}

\Address{$^\S$~Department of Mathematics, Technische Universit\"{a}t M\"{u}nchen, Germany}

\ArticleDates{Received January 28, 2016, in f\/inal form April 15, 2016; Published online April 28, 2016}

\Abstract{Recurrences for positive def\/inite functions in terms of the space dimension have been used in several f\/ields of applications. Such recurrences typically relate to properties of the system of special functions characterizing the geometry of the underlying space. In the case of the sphere ${\mathbb S}^{d-1} \subset {\mathbb R}^d$ the (strict) positive def\/initeness
of the zonal function $f(\cos \theta)$ is determined by the signs of the coef\/f\/icients in the expansion of~$f$
in terms of the Gegenbauer polynomials $\{C^\lambda_n\}$, with $\lambda=(d-2)/2$.
Recent results show that classical dif\/ferentiation and integration applied to~$f$ have positive
def\/initeness preserving properties in this context. However, in these results the space dimension changes in steps of two.
This paper develops operators for zonal functions on the sphere which preserve (strict) positive def\/initeness
while moving up and down in the ladder of dimensions by steps of one.
These fractional operators are constructed to act appropriately
on the Gegenbauer polyno\-mials~$\{C^\lambda_n\}$.}

\Keywords{positive def\/inite zonal functions; ultraspherical expansions; fractional integration; Gegenbauer polynomials}

\Classification{42A82; 33C45; 42C10; 62M30}

\renewcommand{\thefootnote}{\arabic{footnote}}
\setcounter{footnote}{0}

\section{Introduction}
\label{sec:introduction}
This paper develops operators for zonal functions on the sphere which preserve (strict) positive def\/initenesss
while moving up and down in the ladder of dimensions by steps of one. The operators provide tools for
forming families of (strictly) positive def\/inite zonal functions. Such (strictly) positive def\/inite zonal functions
 can be used as covariance models for
estimating regionalized variables and also for interpolation on spheres.

Within a deterministic context, zonal positive def\/inite functions on the sphere have been used for interpolation or approximation of scattered data (see \cite{Fa98,Fr98} and the references therein). The standard ansatz in this setting is a linear combination of spherical translates of a f\/ixed \emph{$($zonal$)$ basis function}. While the present paper
could well have been stated within the context of approximation on the sphere, we rather
chose to provide a probabilistic framework, which is to some extent is equivalent, i.e., the
theory of regionalized variables.

Regionalized variables on spherical domains can nicely be modeled using
random f\/ields on spheres \cite{Bi73, Le59}. Such a random f\/ield is given through a~set of random variables, $Z(x)$ say, where $x\in\Sdmone$. Assuming the f\/ield
to be Gaussian, i.e., for every $n\in{\mathbb N}$, $(Z(x_1),\dots,Z(x_n))^T$ has a~multivariate
Gaussian distribution for any choice of $x_1,\dots,x_n\in\Sdmone$,
the distribution can be characterized by its f\/irst two moments.

Assuming second order (weak) stationarity, the covariance for an
isotropic model is determined by a function
\begin{gather*}
\cov\big(Z(x),Z(y)\big) = f(\cos\theta),
\qquad x,y\in\Sdmone,
\end{gather*}
where $\theta=\theta(x,y)=\arccos\big(x^Ty\big)$ is the geodesic distance between the points
$x$ and $y$ on the sphere $\Sdmone$.

As a consequence of \emph{Kolmogorov's extension theorem}
(see \cite[Theorem~36.3]{Bi79}), the set of isotropic
Gaussian random f\/ields can be identif\/ied with the set of zonal positive def\/inite
functions on the sphere \cite{Ga67, Le59}. We note in passing that L\'evy named such processes \emph{Brownian motion}.

\begin{Definition}A continuous function $g\colon [0,\pi] \to \R$ is (zonal) positive def\/inite on the sphe\-re~$\Sdmone$ if for all~$n\in\N$ and all distinct point sets
$\{x_1,\ldots, x_n\}$ on the sphere, the inequality
\begin{gather*}
\sum_{i,j=1}^n c_i c_j g(\theta(x_i,x_j)) \geq 0
\end{gather*}
holds true for all $c_1,\dots, c_n\in\R$. The function is (zonal) strictly positive def\/inite on~$\Sdmone$ if the inequality holds in the strict sense for all $c_1,\dots , c_n\in\R$ not vanishing simultaneously.
\end{Definition}

Although the natural distance on the unit sphere is an angle in $[0,\pi]$, it is convenient for the purpose of this paper to consider functions in $x=\cos\theta\in[-1,1]$, instead.
Thus, by $\Lambda_{d-1}$ we will denote the cone of all functions $f\in C[-1,1]$ such that $f(\cos\, \cdot\,)$ is positive def\/inite on $\Sdmone$. $\Lambda^+_{d-1}$ will denote the subcone of all strictly positive def\/inite functions in~$\Lambda_{d-1}$.

Gaussian random f\/ields have been widely applied to statistically analyze spatial phenomena \cite{Ch99,Ma65,Sc12}. In particular, kriging allows prediction of
spatial variables from given samples at arbitrary locations. The key ingredient
for such an approach lies in determining a suitable model for the covariance
function of the spherical random f\/ield. Commonly, such a model can be inferred
from given data through f\/itting a parametric family of models (i.e., estimation of the
covariance).

Models for covariance functions have further been used for simulation of stationary random f\/ields. Matheron \cite{Ma73} suggested a method based on proper averaging of stationary random f\/ields on a lower dimensional space. In the Euclidean setting this so-called \emph{turning bands method} works as follows:

Given a stationary random f\/ield $Z_1$ on the real line with covariance function $C_1$ and a~randomly chosen direction $\xi\in\Sdmone$, $Z_\xi=Z_1(x^T\xi)$ def\/ines a stationary random f\/ield on~$\Rd$. Avera\-ging over all directions $\xi$ leads to a stationary f\/ield on $\Rd$ the covariance function of which, $C_d$~say, relates to~$C_1$ via the so-called \emph{turning bands operator}
\begin{gather*}
C_d(t) = \operatorname{const} \int_0^\infty \big(1-\tau^2\big)_+^{\frac{d-3}2} C_1(t\tau) d\tau,
\qquad t\in\R_+.
\end{gather*}
The turning bands operator represents one example out of a suite of operators, mapping radial positive def\/inite functions on $\Rd$ onto such functions on a higher or lower dimensional space. Wendland~\cite{We95}, Wu~\cite{Wu95} and
Gneiting~\cite{Gn02} used such operators to derive compactly supported functions of a given smoothness.
Recurrences for radial positive def\/inite functions in general have been studied by several authors \cite{Sc96, Ca02}. Due to
Schoenberg's characterization of radial positive def\/inite functions and
the fact that scale mixtures of such functions preserve positive def\/initeness,
recurrence operators can be derived from corresponding relations between
special functions. In the case of radial functions on~$\Rd$, the appropriate fundamental relation is \emph{Sonine's first integral} for Bessel functions of the f\/irst kind (see~\cite{Ca02}).

In a recent paper \cite{Be15} the authors applied similar operators to derive parametrized families of suitable locally supported covariance models for stationary random f\/ields on the sphere $\Sdmone$. These operators are based on properties of Gegenbauer polynomials, appearing in Schoenberg's characterization \cite{Sc42} of zonal positive def\/inite functions on the sphere.

\begin{Theorem} \label{thm:schoenberg} Let $\lambda =(d-2)/2$, and consider a continuous function $f$ on $[-1,1]$.
The function $f(\cos\,\cdot\,)$ is positive definite on $\Sdmone$, i.e., $f \in \Lambda_{d-1}$, if and only if
$f$ has an ultraspherical expansion
\begin{gather} \label{eq:expansion_in_schoenberg_thm}
f(x) \sim \sum_{n=0}^\infty a_n C^{\lambda}_n(x),
\qquad x\in[-1,1],
\end{gather}
in which all the coefficients $a_n$ are nonnegative,
and the series converges at the point $x=1$.
If this is the case, the series converges absolutely and uniformly on the whole interval.
\end{Theorem}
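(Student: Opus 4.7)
The plan is to prove both implications of Schoenberg's theorem separately, using the addition formula for Gegenbauer polynomials and the Funk--Hecke formula as the two main tools.

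\emph{Sufficiency (the ``if'' direction).} Assuming all $a_n \geq 0$ and $\sum_n a_n C_n^\lambda(1) < \infty$, I would invoke the classical addition formula
\begin{gather*}
C_n^\lambda(\cos\theta(x,y)) = \frac{C_n^\lambda(1)}{N(n,d)} \sum_{k=1}^{N(n,d)} Y_{n,k}(x) \overline{Y_{n,k}(y)},
\end{gather*}
where $\{Y_{n,k}\}$ is any $L^2$-orthonormal basis of the space of spherical harmonics of degree~$n$, of dimension $N(n,d)$. This exhibits each zonal Gegenbauer kernel as positive semidefinite. Together with the bound $|C_n^\lambda(x)| \leq C_n^\lambda(1)$ for $x\in[-1,1]$, the hypothesis gives absolute and uniform convergence of $\sum_n a_n C_n^\lambda(x)$ on $[-1,1]$, so $f$ is a uniform limit of nonnegative combinations of positive semidefinite zonal kernels, hence $f\in\Lambda_{d-1}$. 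The same bound delivers the final claim about absolute and uniform convergence on the whole interval.

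\emph{Necessity (the ``only if'' direction).} Given $f\in\Lambda_{d-1}$, I would define the candidate coefficients by the standard ultraspherical orthogonality formula
\begin{gather*}
a_n = \frac{1}{h_n^\lambda}\int_{-1}^{1} f(t) C_n^\lambda(t) \bigl(1-t^2\bigr)^{\lambda-1/2}\, dt,
\end{gather*}
and show $a_n\geq 0$ as follows. Fix a spherical harmonic $Y$ of degree $n$ and consider
\begin{gather*}
I_n(Y) \;=\; \int_{\Sdmone}\int_{\Sdmone} f(\cos\theta(x,y))\, Y(x)\,\overline{Y(y)}\, d\sigma(x)\, d\sigma(y).
\end{gather*}
On the one hand, the Funk--Hecke formula reduces the inner integral to $\mu_n Y(y)$, where $\mu_n$ is a strictly positive constant times $a_n$, so $I_n(Y)=\mu_n\|Y\|_2^2$. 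On the other hand, by approximating each surface integral by Riemann sums over a sufficiently fine partition of $\Sdmone$ and using uniform continuity of $f\circ\cos$, the finite-sum positive definiteness of $f$ in the sense of the definition forces $I_n(Y)\geq 0$ in the limit. The two computations together give $a_n\geq 0$.

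For the remaining claim that $\sum_n a_n C_n^\lambda(1)$ converges, I would appeal to the classical uniform Ces\`aro (or Abel) summability of the ultraspherical expansion of a continuous function: the corresponding means of $\sum_n a_n C_n^\lambda$ converge uniformly to $f$, and in particular evaluating at $x=1$ gives means converging to $f(1)$. Since each term $a_n C_n^\lambda(1)$ is nonnegative, Ces\`aro (or Abel) summability of a nonnegative series upgrades to ordinary convergence, so $\sum_n a_n C_n^\lambda(1) = f(1) < \infty$. The main obstacle in the argument is the discretization step used to conclude $I_n(Y)\geq 0$: one must approximate the double surface integral simultaneously in both variables while controlling the resulting error uniformly via the modulus of continuity of $f$ on $[-1,1]$. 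Once this technical bridge is secured, Funk--Hecke immediately converts the inequality into the desired sign condition on the Gegenbauer coefficients, and the remainder of the proof is routine.
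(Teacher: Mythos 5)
The paper offers no proof of this theorem at all: it is Schoenberg's classical characterization, quoted directly from the 1942 paper cited as [Sc42], and the present authors use it only as background. So there is nothing in the paper to compare your argument against line by line; what you have written is essentially the standard classical proof, and its outline is sound. For sufficiency, the addition theorem for spherical harmonics does exhibit each zonal kernel $C_n^\lambda(\cos\theta(x,y))$ as positive semidefinite, and the bound $|C_n^\lambda(x)|\leq C_n^\lambda(1)$ together with convergence at $x=1$ and $a_n\geq 0$ gives the Weierstrass M-test conclusion and passes positive definiteness to the uniform limit. For necessity, the Funk--Hecke reduction of $I_n(Y)$ to a positive multiple of $a_n\|Y\|_2^2$, combined with the Riemann-sum discretization (take the quadrature weights as the $c_i$'s, which is legitimate since the sample points can be chosen distinct), yields $a_n\geq 0$; and Abel (or Ces\`aro) summability of the ultraspherical expansion of a continuous function, plus the elementary fact that an Abel-summable series with nonnegative terms converges, gives $\sum_n a_nC_n^\lambda(1)=f(1)<\infty$. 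Two small points deserve attention if you write this out in full: the paper's definition of positive definiteness uses real coefficients, so you should work with a real orthonormal basis of spherical harmonics (or split into real and imaginary parts) before invoking the finite-sum inequality; and the case $d=2$ ($\lambda=0$) requires the Chebyshev normalization $C_n^0=\tfrac2n T_n$ (as in the paper's Remark) and the circle analogues of the addition formula and Funk--Hecke, since the literal Gegenbauer weight and Funk--Hecke statement are usually given for $d\geq3$. Neither issue is a genuine gap, only a matter of care in the write-up.
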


Chen, Menegatto and Sun \cite{Ch03} showed that a necessary and suf\/f\/icient condition for $f\circ \cos$ to be strictly positive def\/inite on $\Sdmone$, $d\geq 3$, is that, in addition to the conditions of Theorem~\ref{thm:schoenberg}, inf\/initely many of the Gegenbauer coef\/f\/icients~$a_n$ with odd index, and inf\/initely many of those with even index, are positive. In the case $d=2$ the criteria is necessary but not suf\/f\/icient for~$f\circ \cos$ to be strictly positive def\/inite. A characterization in this case has been
given by Menegatto, Oliveira \& Peron~\cite{Me06}, although the criterion is a little more involved (see also~\cite{Ba16} for further details on these issues).

In the same spirit as for the turning bands method, a zonal function def\/ined on a lower dimensional sphere ${\mathbb S}^{d-\kappa}$
can be lifted up to~$\Sdmone$ through averaging
over the set of all copies of~${\mathbb S}^{d-\kappa}$ contained in~$\Sdmone$.
In~\cite{Be15} it is shown that the analogues for the sphere of Matheron's \emph{mont\'{e}e} and \emph{descente} operators (see~\cite{Ma65}) for $\R^d$ are the operators
\begin{gather*}
(\opI f)(x) = \int_{-1}^x f(u) du, \qquad x\in [-1,1],
\end{gather*}
and
\begin{gather*}
(\opD f)(x) = f'(x), \qquad x \in [-1,1].
\end{gather*}
Paralleling the behaviour of Matheron's operators in the Euclidean case the operators move in the
ladder of dimensions by steps of two. Specif\/ically, the $\opI$ and $\opD$ operators map zonal positive def\/inite functions $f( \cos \cdot)$ on ${\mathbb S}^d$ onto ones on ${\mathbb S}^{d-2}$ and ${\mathbb S}^{d+2}$, respectively (see~\cite{Be15} for details). Therefore, the natural question arises, whether it would also be possible to proceed through steps by one within the ladder of dimensions. While in the Euclidean case this could be achieved using fractional dif\/ferentiation and integration (see \cite{Ca02}), the situation is more intriguing in the spherical setting. The reason lies in the fact that the characterizing special functions for the sphere are polynomials, which are not preserved through fractional integration. Thus, one has to work with combinations of fractional operators in order to guarantee that the operators are mapping into the space of polynomials.

In the present paper, we provide a suite of four operators which can be used to def\/ine a~\emph{clavier} (cf.~\cite{Ma65}) for the sphere. The main results are given in
Theorems~\ref{thm:dimension_hopping_4},~\ref{thm:dimension_hopping_5} and~\ref{thm:half_step_D_general_case}, below.
We start with introducing the appropriate
fractional operators in the following section and studying their action on ultraspherical expansions.
The action of the operators on Gegenbauer polynomials shown in the
last section is derived using properties of hypergeometric $_2F_1$-functions.

\section{Def\/inition of the half-step operators}

In the expansion (\ref{eq:expansion_in_schoenberg_thm}) the dimension $d$ appears in the parameter $\lambda=(d-2)/2$ of the Gegenbauer polynomials. This relation between $\lambda$ and $d$ will be f\/ixed throughout the paper.

From $\opD C_n^\lambda=2\lambda C_{n-1}^{\lambda+1}$ (cf.~\cite[10.9(23)]{Er53ii}) we see that classical dif\/ferentiation and its inverse, integration, alter the parameter $\lambda$ by an integer. This is why the operators
$\opI$ and $\opD$ traverse the ladder of dimensions in steps of two (see~\cite{Be15}). At the same time, $\opI$ and $\opD$ change the degree of polynomials by one. Therefore, in order to obtain a one-step operator in the dimension, we have to consider fractional integration and dif\/ferentiation, a fact which perfectly parallels the Euclidean setting (see~\cite{Po07, Ca02}).

We are now ready to def\/ine the half-step operators and discuss their action on positive def\/inite functions on~$\Sdmone$.

\begin{Definition}
For $f\in L^1[-1,1]$ and $\lambda\geq 0$, def\/ine
\begin{gather}
I^\lambda_+ f(x) = I^{\lambda,\frac 12}_+ f(x) =
(1+x)^{-\lambda+\half}
\int_{-1}^x (x-\tau)^{-\frac 12} (1+\tau)^\lambda f(\tau) d\tau, \label{eq:Iplus}\\
I^\lambda_{-} f(x) = I^{\lambda,\frac 12}_{-} f(x) =
(1-x)^{-\lambda+\frac 12}
\int_x^1 (\tau-x)^{-\frac 12} (1-\tau)^{\lambda } f(\tau) d\tau. \label{eq:Iminus}
\end{gather}
Using these, we further def\/ine
\begin{gather}
\CI^\lambda_+ = I^\lambda_{+}+I^\lambda_{-}\qquad \text{and} \qquad
\CI^\lambda_- = I^\lambda_{+}-I^\lambda_{-}. \label{eq:CI_plus_minus}
\end{gather}
\end{Definition}

Apart from the additional factor $(1\pm x)^{-\lambda+\frac 12}$ in front of the integral and the weight \mbox{$(1\pm\tau)^\lambda$}, the operators $I^\lambda_\pm$ are classical
\emph{Riemann--Liouville fractional integrals} of order~$\frac 12$
(cf.\ \cite[De\-f\/i\-ni\-tion~2.1]{Sa93}) on the interval $[-1,1]$.
To def\/ine inverse operators, we use the corresponding \emph{Riemann--Liouville fractional derivates}
 (cf.~\cite[Def\/inition~2.2]{Sa93}).

\begin{Definition}
Let $f$ be absolutely continuous on $[-1,1]$ and $\lambda\geq 0$. Then
\begin{gather*}
D^\lambda_+ f(x) = D^{\lambda,\frac 12}_+ f(x) =
(1+x) \frac{d}{dx} \left\{ (1+x)^{-\lambda} \int_{-1}^x (x-\tau)^{-\half} (1+\tau)^{\lambda-\half} f(\tau) d\tau \right\},\\
D^\lambda_{-} f(x) = D^{\lambda,\frac 12}_{-} f(x) =
(1-x) \frac{d}{dx} \left\{ (1-x)^{-\lambda} \int_{x}^1
(\tau-x)^{-\half} (1-\tau)^{\lambda-\half} f(\tau) d\tau \right\}.
\end{gather*}
Using these, we further def\/ine
\begin{gather*}
\CD^\lambda_+ = D^\lambda_{+}+D^\lambda_{-}\qquad \text{and} \qquad
\CD^\lambda_- = D^\lambda_{+}-D^\lambda_{-}.
\end{gather*}
\end{Definition}

The main results of this paper are the following two theorems giving precise statements of the dimension hopping
and positive def\/initeness preserving properties of
the operators $\CI^\lambda_{\pm}$ and $\CD^\lambda_{\pm}$. These are
 one-step analogues of Theorems 2.2 and~2.3 in~\cite{Be15}.
Since
in the light of Theorem~\ref{thm:schoenberg} the statements can be considered as statements concerning ultraspherical expansions without referring back to a sphere, we are considering $m=d-1$ to be a positive integer.

\begin{Theorem} \label{thm:dimension_hopping_4}
Let $m$ be a positive integer and $\lambda=(m-1)/2$.
\begin{itemize}\itemsep=0pt
\item[$(a)$]
 \begin{itemize}\itemsep=0pt
\item[$(i)$] Let $f \in \Lambda_{m+1}$, $m \geq 1$. Then $ \CI^\lambda_{\pm} f \in \Lambda_{m}$.
 \item[$(ii)$] Let $f \in \Lambda_{m+1}^+$, $m \geq 2$. Then $\CI^\lambda_{\pm} f \in \Lambda_{m}^+$.
 \end{itemize}
 \item[$(b)$] Let $m\geq 1$, $f \in \Lambda_{m+1}^+$ be nonnegative, and $f$ have Gegenbauer expansion,
 \begin{gather*}
f \sim \sum_{n=0}^\infty a_n C^{\lambda+\half}_n,
 \end{gather*}
with all coefficients, $\{ a_n\}_{n=0}^\infty$, positive.
Then $ \CI^\lambda_{+} f$ is also nonnegative, $ \CI^\lambda_+ f \in \Lambda_m^+$,
 and all the coefficients $b_n$ in the expansion
 \begin{gather*}
 \CI^\lambda_+ f \sim \sum_{n=0}^\infty b_n C^{\lambda}_n,
 \end{gather*}
 are positive.
 \end{itemize}
\end{Theorem}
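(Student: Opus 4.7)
The plan is to reduce each of the three assertions (a)(i), (a)(ii) and (b) to the termwise action of $\CI^\lambda_\pm$ on the Gegenbauer basis, and then invoke Schoenberg's Theorem~\ref{thm:schoenberg} (and, for strict positive definiteness, the Chen--Menegatto--Sun criterion) in the target dimension~$m$.

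By Theorem~\ref{thm:schoenberg} applied on $\SS^{m+1}$, the hypothesis $f\in\Lambda_{m+1}$ yields an absolutely and uniformly convergent expansion $f=\sum_{n\geq 0}a_n C^{\lambda+\half}_n$ with $a_n\geq 0$. The substitution $\tau=-1+(x+1)s$ (respectively $\tau=1-(1-x)s$) rewrites the improper integral in~\eqref{eq:Iplus} (respectively~\eqref{eq:Iminus}) as a proper integral against the Beta kernel $s^\lambda(1-s)^{-\half}$ on $[0,1]$, and neatly absorbs the singular prefactor; uniform convergence then permits termwise application,
\begin{gather*}
\CI^\lambda_\pm f \;=\; \sum_{n\geq 0} a_n\,\CI^\lambda_\pm C^{\lambda+\half}_n.
\end{gather*}
The key identities, deferred to the last section and proved there using standard $_2F_1$ transformations applied to the Jacobi-type kernels, take the form
\begin{gather*}
\CI^\lambda_+ C^{\lambda+\half}_n \;=\; \alpha_n\, C^\lambda_n, \qquad
\CI^\lambda_- C^{\lambda+\half}_n \;=\; \beta_n\, C^\lambda_{n+1},
\end{gather*}
with explicit strictly \emph{positive} constants $\alpha_n,\beta_n>0$ (and the customary Chebyshev convention for $C^0_n$ when $m=1$).

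Substituting these into the previous display gives $\CI^\lambda_+ f=\sum_{n\geq 0}\alpha_n a_n C^\lambda_n$ and $\CI^\lambda_- f=\sum_{n\geq 0}\beta_n a_n C^\lambda_{n+1}$, both with nonnegative coefficients; Schoenberg's theorem in dimension $m$ therefore yields $\CI^\lambda_\pm f\in\Lambda_m$, which is~(a)(i). For~(a)(ii) with $m\geq 2$, Chen--Menegatto--Sun characterises $\Lambda_{m+1}^+$ by the presence of infinitely many positive $a_n$ with $n$ even and infinitely many with $n$ odd; since $\alpha_n,\beta_n>0$ this property is inherited term by term (for $\CI^\lambda_-$ the index shift by~$1$ simply swaps the two parity classes while preserving infinitude in each), giving $\CI^\lambda_\pm f\in\Lambda_m^+$. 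For~(b), if $f\geq 0$ then the integrand defining each $I^\lambda_\pm f$ is a product of nonnegative factors; combined with the harmless role of the outer prefactor $(1\pm x)^{-\lambda+\half}$ explained by the substitution above, this gives $I^\lambda_\pm f\geq 0$ and hence $\CI^\lambda_+ f=I^\lambda_+ f+I^\lambda_- f\geq 0$. Strict positivity of every $b_n=\alpha_n a_n$ is immediate from the positivity of both factors.

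The principal obstacle is the second display: a priori each of $I^\lambda_\pm C^{\lambda+\half}_n$ is merely a polynomial of degree $n+1$, so its content is a cancellation that collapses the sum (resp.\ difference) onto a single Gegenbauer polynomial of parameter~$\lambda$ with a \emph{positive} coefficient. Positivity of $\alpha_n$ and $\beta_n$ is precisely what makes the dimension-hopping operators preserve positive definiteness, and it must be extracted from the explicit hypergeometric evaluation carried out in the last section of the paper.
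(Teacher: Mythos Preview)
Your proposal is correct and follows essentially the same route as the paper: both arguments rest on the boundedness of $\CI^\lambda_\pm$ on $C[-1,1]$ (established via the same Beta-integral substitution you describe), the termwise action $\CI^\lambda_+ C^{\lambda+\half}_n=\alpha_n C^\lambda_n$ and $\CI^\lambda_- C^{\lambda+\half}_n=\beta_n C^\lambda_{n+1}$ with $\alpha_n,\beta_n>0$ from Theorem~\ref{thm:action_on_polynomials}, and then Schoenberg together with the Chen--Menegatto--Sun criterion. The paper's own proof is terser because it defers the analogous steps to~\cite{Be15} and only records the adaptations needed here (boundedness, positivity of $\CI^\lambda_+$, and the absence of an additive constant), whereas you spell out the substitution, the termwise passage, and the parity-swap under the index shift explicitly; but the underlying argument is the same.
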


\begin{proof}
The proofs for the statements are almost identical with those of the corresponding parts of Proposition~2.2 in~\cite{Be15}, provided that proper analogues for certain statements on Gegenbauer polynomials are given. We therefore restrict ourselves to pointing out where adaptations of the proof given in~\cite{Be15} are needed.

One of these details concerns
the boundedness of the operators $\CI^\lambda_{\pm}$ as operators from $C[-1,1]$ to
$C[-1,1]$. This follows from the def\/initions of $I^\lambda_{\pm}$ and $\CI^\lambda_{\pm}$ in equations~\eqref{eq:Iplus},~\eqref{eq:Iminus} and~\eqref{eq:CI_plus_minus}, combined with the beta integrals
 \begin{gather} \label{eq:beta_integral1}
 \int_{-1}^x (x-\tau)^{-\half} (1+\tau)^{\nu} d \tau=
 (1+x)^{\nu+\half} B\left(\tfrac{1}{2}, \nu+1\right) = (1+x)^{\nu+\half} \frac{\Gamma(\half)\Gamma(\nu+1)}{\Gamma(\nu + \frac 32)},
 \end{gather}
and
 \begin{gather*} 
 \int_{x}^1 (\tau-x)^{-\half} (1-\tau)^{\nu} d \tau=
 (1-x)^{\nu+\half} B\left(\tfrac{1}{2}, \nu+1\right) = (1-x)^{\nu+\half} \frac{\Gamma(\half)\Gamma(\nu+1)}{\Gamma(\nu + \frac 32)}.
\end{gather*}
Similarly, positivity of the operator $\CI^\lambda_+$ follows from the def\/initions~\eqref{eq:Iplus},~\eqref{eq:Iminus} and~\eqref{eq:CI_plus_minus}. The main ingredient thus remaining to be shown is the action of $\CI^\lambda_{\pm}$ on the Gegenbauer polyno\-mial~$C^{\lambda+\frac12}_n$. This part is given in Theorem~\ref{thm:action_on_polynomials}, below.
Note that in contrast to the operators studied in~\cite{Be15}, there is no need to deal with an extra constant in statements~(i) and~(ii). This follows from Theorem~\ref{thm:action_on_polynomials}, showing that the operators~$\CI^\lambda_{\pm}$
do not introduce an additional constant.
\end{proof}

\begin{Theorem} \label{thm:dimension_hopping_5}
Suppose that $f \in \Lambda_m$, $m\geq 1$, and let $\lambda =(m-1)/2$.
Then, if both functions $\CD^\lambda_{\pm} f \in C[-1,1]$, then $\CD^\lambda_{\pm}f \in \Lambda_{m+1}$.
If, in addition, $f\in \Lambda_{m}^+$,
then $\CD^\lambda_{\pm}\in \Lambda_{m+1}^+$.
\end{Theorem}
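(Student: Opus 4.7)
The plan is to parallel the proof of Theorem~2.3 in~\cite{Be15}, adapted to the fractional setting, reducing the whole statement to the action of $\CD^\lambda_{\pm}$ on a single Gegenbauer polynomial. First I would invoke Theorem~\ref{thm:schoenberg} to expand $f \sim \sum_{n=0}^\infty a_n C^\lambda_n$ with $a_n \geq 0$ and convergence at $x=1$ (hence absolute uniform convergence on $[-1,1]$). The target is to produce the ultraspherical expansion $\CD^\lambda_{\pm} f \sim \sum_n b_n C^{\lambda+\half}_n$ with all $b_n \geq 0$; combined with the hypothesis $\CD^\lambda_{\pm} f \in C[-1,1]$, a second application of Schoenberg's theorem at parameter $\lambda+\half = m/2$ (corresponding to the sphere $\SS^{m}$) then delivers $\CD^\lambda_{\pm} f \in \Lambda_{m+1}$.

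The nonnegativity of the coefficients $b_n$ is exactly what Theorem~\ref{thm:half_step_D_general_case} is engineered to supply: applied to $C^\lambda_n$ it should produce a nonnegative scalar multiple $\gamma_n^{\pm}\, C^{\lambda+\half}_n$, the parity-split definition $\CD^\lambda_{\pm} = D^\lambda_+ \pm D^\lambda_-$ ensuring that even- and odd-index contributions reinforce rather than cancel, in exact analogy with the role the splitting $\CI^\lambda_{\pm}$ played in the proof of Theorem~\ref{thm:dimension_hopping_4}. Done termwise, this yields $b_n = \gamma_n^{\pm} a_n \geq 0$.

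The main technical obstacle is justifying the termwise application of $\CD^\lambda_{\pm}$, since the operator contains a classical derivative and the Gegenbauer series of $f$ is only guaranteed to converge uniformly. The cleanest route is to compute the $n$-th Gegenbauer coefficient of $\CD^\lambda_{\pm} f$ in the $\{C^{\lambda+\half}_n\}$ system directly via orthogonality, shifting the derivative off of $f$ through integration by parts; the resulting integral depends only on $f$ itself together with Beta-type identities in the spirit of~\eqref{eq:beta_integral1}, bypassing termwise differentiation altogether. Alternatively, one may apply $\CD^\lambda_{\pm}$ to the partial sums $S_N = \sum_{n \leq N} a_n C^\lambda_n$; each $\CD^\lambda_{\pm} S_N$ is then a finite nonnegative combination of $C^{\lambda+\half}_n$'s and therefore already belongs to $\Lambda_{m+1}$, and the continuity hypothesis on $\CD^\lambda_{\pm} f$ is invoked to pass to a uniform limit using the closedness of $\Lambda_{m+1}$ under uniform convergence.

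For the strict statement, I would transfer the Chen--Menegatto--Sun criterion from $f$ to $\CD^\lambda_{\pm} f$: since each $\gamma_n^{\pm}$ produced by Theorem~\ref{thm:half_step_D_general_case} is strictly positive whenever it is nonzero, the existence of infinitely many positive $a_n$ of each parity immediately forces the same pattern on $\{b_n\}$ and places $\CD^\lambda_{\pm} f$ in $\Lambda_{m+1}^+$ whenever $m+1 \geq 3$. The borderline case $m = 1$, where the target sphere is $\SS^1$ and the even/odd criterion is only necessary, has to be handled by direct verification from the explicit positive Gegenbauer (i.e., Chebyshev) coefficients together with the characterization of Menegatto--Oliveira--Peron already cited in the introduction.
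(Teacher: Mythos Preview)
Your plan matches the paper's: the ``cleanest route'' you propose---computing the $C^{\lambda+\half}_n$-coefficient of $\CD^\lambda_\pm f$ directly by integration by parts---is exactly the content of Theorem~\ref{thm:half_step_D_general_case}, and the paper's proof of Theorem~\ref{thm:dimension_hopping_5} then simply reads off nonnegativity of the resulting coefficients, applies Schoenberg, and invokes Chen--Menegatto--Sun for strictness. Two minor slips: $\CD^\lambda_+$ sends $C^\lambda_n$ to a positive multiple of $C^{\lambda+\half}_{n-1}$, not $C^{\lambda+\half}_n$ (equivalently $b_n\propto a_{n+1}$ in~\eqref{eq:multiplier_b_n}), which is harmless and merely swaps parities in the strict-positive-definiteness check; and no borderline case arises here, since $\CD^\lambda_\pm$ \emph{raises} dimension, so for $m\geq 1$ the target $\Lambda_{m+1}$ always has $m+1\geq 2$ and Chen--Menegatto--Sun applies.

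The one step you leave implicit is a real one: from ``the formal Gegenbauer coefficients of the continuous function $\CD^\lambda_\pm f$ are nonnegative'' you still need that $\sum_n b_n C^{\lambda+\half}_n(1)$ converges, which Schoenberg's theorem requires. Continuity alone does not give pointwise convergence of an ultraspherical expansion. The paper closes this as follows: continuity of $\CD^\lambda_\pm f$ gives Abel summability of its Gegenbauer series; at $x=1$ the terms $b_n C^{\lambda+\half}_n(1)$ are nonnegative, and for a series of nonnegative terms Abel summability forces ordinary summability; finally, since $|C^{\lambda+\half}_n|$ attains its maximum at $1$, the Weierstra{\ss} M-test upgrades this to uniform convergence on $[-1,1]$. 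Your alternative partial-sum route runs into the same obstacle in disguise: uniform convergence $S_N\to f$ does not by itself yield $\CD^\lambda_\pm S_N\to \CD^\lambda_\pm f$, precisely because of the classical derivative inside the operator.
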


\begin{Remark} Since the operators def\/ined above can be seen as standard operators
of fractional integration/dif\/ferentiation, classical results from fractional
calculus can be applied. For example,
if $(1+\tau)^{-1/2}f(\tau)\in\Lip\alpha$ for some $\alpha>\frac 12$,
in particular, if $f\in\Lip\alpha$ and supp$f\subset(-1,1]$, then
by Theorem~19 in~\cite{Ha28}~$\CD_\pm^0 f$ exists and is continuous. An analogous
statement holds for general~$\lambda$.
\end{Remark}

The proof of Theorem~\ref{thm:dimension_hopping_5}
depends heavily on a multiplier relationships between the Gegenbauer coef\/f\/icients of~$f$ and those
of $\CD^\lambda_{\pm}f$. The details of these relationship, and the proof of Theorem~\ref{thm:dimension_hopping_5},
will be deferred to the next subsection.

Let us f\/inish the section with considering an example.

\begin{Example}
Consider the operator $I_+^\lambda$. In view of its def\/inition~(\ref{eq:Iplus})
this operator maps functions locally supported near one to functions locally
supported near one. Also, since
$I_+^\lambda=(\CI_+^\lambda+\CI_-^\lambda)/2$ this operator preserves
(strict) positive def\/initeness by Theorem~\ref{thm:dimension_hopping_4}.

Note that by a change of variables
\begin{gather*}
\int_{-1}^x (x-\tau)^{-\frac 12}(1+\tau)^\lambda f(\tau) d\tau
= (x+1)^{\lambda+\frac 12}
\int_0^1 (1-s)^{-\frac 12}s^\lambda f\big((x+1)s-1\big) ds.
\end{gather*}
Therefore, if $f$ were such that $f\big((x+1)s-1\big)=(1-ys)^{-a}$, the
integral becomes
\begin{gather}
\label{eq:Euler_integral}
\int_0^1 (1-s)^{-\frac 12}s^\lambda(1-ys)^{-a} ds
= \frac{\Gamma(\lambda+1)\Gamma\left(\frac 12\right)}
 {\Gamma\left(\lambda+\frac 32\right)}\,
	\HypFunTO{a}{\lambda+1}{\lambda+\frac 32}{y},
\end{gather}
being a special case of \emph{Euler's integral} for hypergeometric functions
(cf.\ \cite[\href{http://dlmf.nist.gov/15.6.E1}{(15.6.1)}]{DLMF}).
\end{Example}

Now consider the \emph{Cauchy family}
\begin{gather*}
\varphi_{\alpha,\beta}(r) = (1+r^\alpha)^{-\frac\beta\alpha},
\qquad 0<\alpha\leq 2, \qquad \beta>0,
\end{gather*}
which is strictly positive def\/inite on ${\mathbb R}^d$
for all $d\geq 1$ (cf.~\cite{Gn02}).
Choosing $\alpha=2$ and restricting the function $\varphi_{2,\beta}$ to
the sphere, we obtain (setting $\tau=\cos\theta$)
\begin{gather*}
\varphi_\beta(\tau) = \varphi_{2,\beta}\big(\sqrt{2-2\cos\theta}\big) =
(3-2\tau)^{-\frac\beta 2},
\qquad \beta>0.
\end{gather*}
Thus,
\begin{gather*}
\varphi_\beta\big( (x+1)s-1\big)
=
5^{-\frac\beta 2} \left( 1-\tfrac 25(x+1)s\right)^{-\frac\beta 2}.
\end{gather*}
Therefore, from~(\ref{eq:Euler_integral}) we have that
\begin{gather*}
I_+^\lambda \varphi_\beta(x) =
\frac{\sqrt\pi\Gamma(\lambda+1)}{5^{\frac\beta 2}\Gamma\left(\lambda+\frac 32\right)}
(x+1) \HypFunTO{\frac\beta 2}{\lambda+1}{\lambda+\frac 32}{\tfrac 25(x+1)}.
\end{gather*}
Since (see \cite[\href{http://dlmf.nist.gov/15.4.E6}{(15.4.6)}]{DLMF})
\begin{gather}
\label{eq:power_as_hypfun}
(1-z)^{-a} = \HypFunTO{a}{b}{b}{z},
\end{gather}
we can choose $\beta=2\lambda+3$, yielding
\begin{gather}
\label{eq:integral_cauchy}
I_+^\lambda \varphi_{2\lambda+3}(x) =
\sqrt{\frac\pi 5} \frac{\Gamma(\lambda+1)}{\Gamma\left(\lambda+\frac 32\right)} (x+1)(3-2x)^{-(\lambda+1)}.
\end{gather}
Therefore, the function given in~(\ref{eq:integral_cauchy}) is strictly positive def\/inite
on ${\mathbb S}^{2\lambda+1}$ by the remark at the start of the example.

\looseness=-1
We can follow the same line of argument applying $I_+^\mu$ to
$I_+^\lambda \varphi_{2\lambda+3}$. Again, the result is a~hypergeometric function.
Interestingly, for $\mu=\lambda-\frac 32$ the series is of the form~(\ref{eq:power_as_hypfun}) resulting in
\begin{gather*}
I_+^{\lambda-\frac 32}\big( I_+^\lambda \varphi_{2\lambda+3} \big)(x)
= \frac\pi 5 \frac{\Gamma\left(\lambda+\frac 12\right)}
 {\Gamma\left(\lambda+\frac 32\right)}\,
(x+1)^2 (3-2x)^{-\left(\lambda+\frac 12\right)}.
\end{gather*}
In general, for a function
\begin{gather*}
g_{m,\gamma}(x) = (x+1)^m (3-2x)^{-\gamma},
\qquad m\in{\mathbf N}_0, \qquad \gamma>0,
\end{gather*}
we obtain that
\begin{gather*}
I_+^{\gamma-m-\frac 32} g_{m,\gamma}(x) =
\sqrt{\frac\pi 5} \frac{\Gamma\left(\gamma-\frac 12\right)}
 {\Gamma(\gamma)}
(x+1)^{m+1} (3-2x)^{-\left(\gamma-\frac 12\right)}.
\end{gather*}

\subsection[Ultraspherical expansions of $f$ and $\CD^\lambda_{\pm} f$]{Ultraspherical expansions of $\boldsymbol{f}$ and $\boldsymbol{\CD^\lambda_{\pm} f}$}

The main results of this section will be Theorems~\ref{thm:half_step_D_lambda_eq_0}
and \ref{thm:half_step_D_general_case}
giving multiplier relationships between the Gegenbauer coef\/f\/icients of the (formal) series of~$f$
and those of the (formal) series of~$\CD^\lambda_{+} f$ and~$\CD^\lambda_{-}f$. These relationships will later be used to show
that the operators~$\CD^\lambda_{+}$ and $\CD^\lambda_{-}$ have the positive def\/initeness preserving
properties given in Theorem~\ref{thm:dimension_hopping_5}.

The f\/irst statement shows that the operators $\CD^{0}_{+}$, $\CD^{0}_{-}$
can be applied term by term to a~Chebyshev series to obtain the formal Legendre series of $\CD^0_{\pm} f$.
\begin{Theorem} \label{thm:half_step_D_lambda_eq_0}
Let $f \in C[-1,1]$ with $($formal$)$ Chebyshev series
\begin{gather*} 
f \sim \sum_{n=0}^{\infty} a_n T_n.
\end{gather*}
If both functions $\CD^0_{\pm} f \in C[-1,1]$, then the $($formal$)$ Legendre series
\begin{gather*}
\CD^0_{+} f \sim \sum_{n=0}^\infty b_n P_n
\end{gather*}
has coefficients
\begin{gather*}
b_{n} = (n+1) \pi a_{n+1}, \qquad n\in \N_0 ,
\end{gather*}
and the $($formal$)$ Legendre series
\begin{gather*}
\CD^{0}_{-} f \sim \sum_{n=0}^\infty c_n P_n
\end{gather*}
has coefficients
\begin{gather*}
 c_n = n \pi a_n, \qquad n\in \N_0 .
 \end{gather*}
\end{Theorem}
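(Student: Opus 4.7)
The plan is to apply the operators to each Chebyshev polynomial $T_n$ in turn, identify $\CD^0_\pm T_n$ explicitly in the Legendre basis, and then extract the formal Legendre coefficients of $\CD^0_\pm f$.

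For the first step I would substitute $\tau=\cos\phi$, $x=\cos\theta$ in the inner Riemann--Liouville integral of $T_n$, turning the integral inside the definition of $D^0_+$ into
\begin{gather*}
\sqrt 2\int_\theta^\pi \frac{\sin(\phi/2)\cos(n\phi)}{\sqrt{\cos\theta-\cos\phi}} d\phi.
\end{gather*}
The product-to-sum identity $2\sin(\phi/2)\cos(n\phi)=\sin((n+\tfrac12)\phi)-\sin((n-\tfrac12)\phi)$ together with the Dirichlet--Mehler representation
\begin{gather*}
P_n(\cos\theta)=\tfrac{\sqrt 2}{\pi}\int_\theta^\pi\frac{\sin((n+\frac12)\phi)}{\sqrt{\cos\theta-\cos\phi}} d\phi
\end{gather*}
collapses this to $\tfrac{\pi}{2}[P_n(x)-P_{n-1}(x)]$. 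Applying $(1+x)\tfrac{d}{dx}$ and invoking the Legendre identity $(1+x)[P_n'(x)-P_{n-1}'(x)]=n[P_n(x)+P_{n-1}(x)]$, which follows from the standard recurrences $(2n+1)P_n=P_{n+1}'-P_{n-1}'$ and $xP_n'-P_{n-1}'=nP_n$, produces $D^0_+ T_n=\tfrac{n\pi}{2}(P_n+P_{n-1})$. The parallel computation for $D^0_-$, using the companion Dirichlet--Mehler formula with $\cos((n+\tfrac12)\phi)$ on $[0,\theta]$ and the identity $(1-x)[P_n'+P_{n-1}']=n[P_{n-1}-P_n]$, yields $D^0_- T_n=\tfrac{n\pi}{2}(P_{n-1}-P_n)$. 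Adding and subtracting then gives $\CD^0_+ T_n=n\pi P_{n-1}$ and $\CD^0_- T_n=n\pi P_n$, with the convention $P_{-1}\equiv 0$ accounting for $\CD^0_\pm T_0=0$.

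For the second step I would pass from the action on basis polynomials to the formal series. Since $\CD^0_\pm f\in C[-1,1]$, the formal Legendre coefficients
\begin{gather*}
b_n=\tfrac{2n+1}{2}\int_{-1}^1 \CD^0_+ f(x) P_n(x) dx
\end{gather*}
are well defined. Substituting the definition of $\CD^0_+$, exchanging the order of integration by Fubini, and performing a fractional (Abel-type) integration by parts to transfer the operator onto $P_n$ identifies the adjoint $(\CD^0_+)^\ast P_n$ as $\tfrac{4(n+1)}{2n+1} T_{n+1}(x)(1-x^2)^{-1/2}$, the constant being forced by the already-established action on each $T_m$. Comparing with the Chebyshev coefficient formula $a_{n+1}=\tfrac{2}{\pi}\int_{-1}^1 f(x) T_{n+1}(x)(1-x^2)^{-1/2} dx$ then gives $b_n=(n+1)\pi a_{n+1}$, and the same argument applied to $\CD^0_-$ yields $c_n=n\pi a_n$.

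The main obstacle lies in the second step: the Fubini/integration-by-parts manipulation must carefully handle the singular $(x-\tau)^{-1/2}$ kernel together with the outer derivative and the $(1\pm x)$ factor in $D^0_\pm$, especially the behaviour at the endpoints $x=\pm 1$. The first step, by contrast, is essentially an organized deployment of the two Dirichlet--Mehler representations and standard Legendre recurrences and carries no conceptual difficulty once the correct pairings are set up.
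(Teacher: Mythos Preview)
Your proposal is correct, and Step~1 takes a genuinely different and rather elegant route from the paper.  The paper never computes $\CD^0_\pm T_n$ on the way to Theorem~2.6; instead it writes the Legendre coefficient $b_n=\tfrac{2n+1}{2}(H_++H_-)$ with $H_\pm=\int_{-1}^1 P_n\,D^0_\pm f\,dx$, integrates by parts once, then uses the expansion of $\tfrac{d}{dx}\{(1\pm x)P_n(x)\}$ as a sum of lower Legendre polynomials together with the DLMF identities $\int_t^1(x-t)^{-1/2}P_k(x)\,dx$ and $\int_{-1}^t(t-x)^{-1/2}P_k(x)\,dx$ expressed in terms of $T_k$, $T_{k+1}$ to reduce each $H_\pm$ to a two-term combination of Chebyshev coefficients; the $a_n$'s then cancel in $H_++H_-$.  (The identification $\CD^0_+T_n=n\pi P_{n-1}$, $\CD^0_-T_n=n\pi P_n$ does appear in the paper, but only later, as a limiting case of a hypergeometric computation in Section~3.)  Your Dirichlet--Mehler argument reaches the same identities for $\CD^0_\pm T_n$ much more directly and with no contiguous-relation machinery; this is the main advantage of your approach.

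Where the two approaches converge is your Step~2: the integration by parts that moves the operator from $f$ to $P_n$ is exactly the analytic core of the paper's proof, including the endpoint analysis you flag.  Be aware, though, that your phrase ``the constant being forced by the already-established action on each $T_m$'' hides a small logical obligation.  After Fubini and integration by parts you will have $b_n=\tfrac{2n+1}{2}\int_{-1}^1 f(t)\,G_n(t)\,dt$ with $G_n$ explicit; to identify $G_n$ solely by testing against the $T_m$ you need to know that $G_n$ lies in a space (e.g.\ $L^1[-1,1]$) where the Chebyshev system is determining.  That information is in fact available from the explicit form of $G_n$ produced by the integration by parts, so the argument closes, but you cannot avoid the careful computation of the adjoint altogether---Step~1 only lets you skip tracking constants, not the analysis itself.
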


Similar relations between coef\/f\/icients in ultraspherical expansions hold for higher order Gegenbauer polynomials.

\begin{Theorem} \label{thm:half_step_D_general_case}
Let $\lambda>0$, and let
$f \in C[-1,1]$ have a $($formal$)$ Gegenbauer series
\begin{gather*} 
f \sim \sum_{n=0}^{\infty} a_n C^\lambda_n.
\end{gather*}
If both functions $\CD^\lambda_{\pm} f \in C[-1,1]$, then the $($formal$)$ Gegenbauer series
\begin{gather*}
\CD^\lambda_{+} f \sim \sum_{n=0}^\infty b_n C_n^{\lambda+\half}
\end{gather*}
has coefficients
\begin{gather}
\label{eq:multiplier_b_n}
b_n= \frac{\Gamma(\lambda+\half) \sqrt{\pi}}{\Gamma(\lambda)}\,
 \frac{2(n+2\lambda+1)}{n+\lambda+1} a_{n+1}, \qquad n\in \N_0,
\end{gather}
and the $($formal$)$ Gegenbauer series
\begin{gather*}
\CD^\lambda_{-}f \sim \sum_{n=0}^\infty c_n C^{\lambda+\half}_n,
\end{gather*}
has coefficients
\begin{gather}
\label{eq:multiplier_c_n}
c_n = \frac{\Gamma(\lambda+\half) \sqrt{\pi}}{\Gamma(\lambda)} \frac{2n}{n+\lambda} a_n, \qquad n\in \N_0 .
 \end{gather}
\end{Theorem}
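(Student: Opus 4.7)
The plan is to reduce the theorem to an explicit pointwise identity for the action of $\CD^\lambda_\pm$ on a single Gegenbauer polynomial $C_n^\lambda$, and then recover the multiplier formulas (\ref{eq:multiplier_b_n})--(\ref{eq:multiplier_c_n}) for general $f$ via orthogonality. The polynomial target identities I aim for are
\begin{gather*}
\CD^\lambda_+ C_n^\lambda = \frac{\Gamma(\lambda+\half)\sqrt{\pi}}{\Gamma(\lambda)}\,\frac{2(n+2\lambda)}{n+\lambda}\,C_{n-1}^{\lambda+\half},\qquad \CD^\lambda_- C_n^\lambda = \frac{\Gamma(\lambda+\half)\sqrt{\pi}}{\Gamma(\lambda)}\,\frac{2n}{n+\lambda}\,C_n^{\lambda+\half},
\end{gather*}
from which the theorem statement follows by the reindexing $n\mapsto n+1$ in the $\CD^\lambda_+$ case.

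For the polynomial computation, I would first recast $C_n^\lambda = \frac{(2\lambda)_n}{(\lambda+\half)_n}P_n^{(\lambda-\half,\lambda-\half)}$ in Jacobi form and evaluate the Riemann--Liouville half-integral inside $D^\lambda_+$ via the Askey-type identity
\begin{gather*}
\int_{-1}^x (x-\tau)^{\mu-1}(1+\tau)^{\beta}P_n^{(\alpha,\beta)}(\tau)\,d\tau = \frac{\Gamma(\mu)\Gamma(n+\beta+1)}{\Gamma(n+\beta+\mu+1)}(1+x)^{\beta+\mu}P_n^{(\alpha-\mu,\beta+\mu)}(x)
\end{gather*}
with $\mu=\half$, $\alpha=\beta=\lambda-\half$. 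The outer derivative in the definition of $D^\lambda_+$ then collapses via the classical rule $\frac{d}{dx}P_n^{(\alpha,\beta)} = \frac{n+\alpha+\beta+1}{2}P_{n-1}^{(\alpha+1,\beta+1)}$, yielding $D^\lambda_+ C_n^\lambda(x) = M_n(1+x)P_{n-1}^{(\lambda,\lambda+1)}(x)$ for an explicit constant $M_n$; a parity substitution $x\mapsto -x$ combined with $C_n^\lambda(-x) = (-1)^n C_n^\lambda(x)$ delivers the companion $D^\lambda_- C_n^\lambda(x) = M_n(1-x)P_{n-1}^{(\lambda+1,\lambda)}(x)$. Adding and subtracting reduces matters to the standard contiguous Jacobi identities
\begin{gather*}
(1+x)P_{n-1}^{(\lambda,\lambda+1)}(x) + (1-x)P_{n-1}^{(\lambda+1,\lambda)}(x) = 2 P_{n-1}^{(\lambda,\lambda)}(x),\\
(1+x)P_{n-1}^{(\lambda,\lambda+1)}(x) - (1-x)P_{n-1}^{(\lambda+1,\lambda)}(x) = \frac{2n}{n+\lambda}P_n^{(\lambda,\lambda)}(x),
\end{gather*}
after which the conversion $P_k^{(\lambda,\lambda)} = \frac{(\lambda+1)_k}{(2\lambda+1)_k}C_k^{\lambda+\half}$ and simplification using $\Gamma(\lambda+1) = \lambda\Gamma(\lambda)$ together with the telescoping identity $(2\lambda)_n(n+2\lambda)/(2\lambda+1)_n = 2\lambda$ produces precisely the multipliers displayed above.

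To pass from polynomials to arbitrary $f\in C[-1,1]$ with $\CD^\lambda_\pm f\in C[-1,1]$, I would extract each Gegenbauer coefficient through the orthogonality integral $b_n\,h_n^{\lambda+\half} = \int_{-1}^1 \CD^\lambda_+ f(x)\,C_n^{\lambda+\half}(x)(1-x^2)^\lambda\,dx$, and use integration by parts combined with Fubini's theorem (for the fractional-integral portion) to transfer $\CD^\lambda_+$ onto the test polynomial. The polynomial identity just established forces the resulting kernel to be a scalar multiple of $C_{n+1}^\lambda$ in the $C^\lambda$-orthogonal system on $[-1,1]$ with weight $(1-x^2)^{\lambda-\half}$, matching $b_n$ against $a_{n+1}$ with precisely the multiplier in (\ref{eq:multiplier_b_n}); the argument for $c_n$ and $a_n$ is identical. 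The principal obstacles are the gamma-function bookkeeping and the verification that all boundary terms in the integration by parts vanish --- the singular factor $(1\pm x)^{-\lambda}$ from the definition of $D^\lambda_\pm$ is compensated at the endpoints by a $(1\pm x)^{\lambda+1}$ factor coming from the product of the two Gegenbauer weights, so all boundary contributions are controlled once $\lambda\geq 0$.
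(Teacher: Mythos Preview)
Your derivation of the polynomial identities $\CD^\lambda_\pm C_n^\lambda = (\text{const})\,C_{\bullet}^{\lambda+\half}$ is correct and takes a genuinely different route from the paper. The paper obtains these formulas (stated separately as Theorem~\ref{thm:action_of_derivative}) by writing Gegenbauer polynomials as terminating ${}_2F_1$ series, evaluating $D^\lambda_\pm C_n^\lambda$ via Lemma~\ref{lem:Ipm_on_F} and the ${}_2F_1$ differentiation rule, and then collapsing $D^\lambda_+\pm D^\lambda_-$ with a Gauss contiguous relation. Your Jacobi-polynomial approach through the Bateman/Askey half-integral and the two contiguous identities for $(1\pm x)P_{n-1}^{(\lambda,\lambda+1)}\pm(1\mp x)P_{n-1}^{(\lambda+1,\lambda)}$ is equally valid and arguably more structural, since the weights $(1\pm\tau)^{\lambda-\half}$ in $D^\lambda_\pm$ match the Jacobi weight exactly; the constant bookkeeping checks out. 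Note, however, that the paper does \emph{not} use these polynomial identities in its proof of Theorem~\ref{thm:half_step_D_general_case}: it computes the coefficient $b_n$ directly by integration by parts and Fubini, followed by Lemmas~\ref{lem:deriv_involving_Q}, \ref{lem:frac_integral_Gegenbauer} and Corollary~\ref{cor:frac_integral_Gegenbauer}, which evaluate the resulting half-integrals of Gegenbauer polynomials explicitly.

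There is a real gap in your passage to general $f$. After integration by parts and Fubini you reach $b_n h_n^{\lambda+\half}=\int_{-1}^1 f(t)\,K_n(t)\,dt$ for an explicit kernel $K_n$, and you assert that ``the polynomial identity just established forces the resulting kernel to be a scalar multiple of $C_{n+1}^\lambda$''. But the polynomial identity describes the \emph{forward} action of $\CD^\lambda_+$ on the $C_m^\lambda$, whereas $K_n$ encodes the \emph{adjoint} action on $C_n^{\lambda+\half}$. Duality combined with the forward identity yields only the moments $\int_{-1}^1 C_m^\lambda(t)\,K_n(t)\,dt$; to conclude that $K_n(t)$ equals $(1-t^2)^{\lambda-\half}$ times a multiple of $C_{n+1}^\lambda(t)$ you must know independently that $K_n(t)/(1-t^2)^{\lambda-\half}$ is a polynomial (or at least lies in the weighted $L^2$ space where the $C_m^\lambda$ are complete). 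The paper establishes exactly this by direct computation: Lemma~\ref{lem:deriv_involving_Q} factors $\frac{d}{dx}\{(1+x)^{\lambda+1}(1-x)^\lambda C_n^{\lambda+\half}(x)\}$ as $(1+x)(1-x^2)^{\lambda-1}Q_{n+1}(x)$, and Lemma~\ref{lem:frac_integral_Gegenbauer} and Corollary~\ref{cor:frac_integral_Gegenbauer} then show the remaining half-integrals produce $(1-t)^{\lambda-\half}$ times an explicit combination of $C_n^\lambda$ and $C_{n+1}^\lambda$. Your own machinery could close this gap --- the mirrored Bateman identity $\int_x^1(\tau-x)^{\mu-1}(1-\tau)^\alpha P_k^{(\alpha,\beta)}(\tau)\,d\tau = c\,(1-x)^{\alpha+\mu}P_k^{(\alpha+\mu,\beta-\mu)}(x)$ handles the adjoint direction --- but this step must be carried out explicitly, not inferred from the forward identity.
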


\begin{Remark} Theorem~\ref{thm:half_step_D_lambda_eq_0} is the limiting
case of Theorem~\ref{thm:half_step_D_general_case} under the limit
\begin{gather}
\label{eq:limit_lambda_to_zero}
C_n^0(x) =
\lim_{\lambda\to 0^+} \frac 1\lambda
C_n^\lambda(x),
\qquad n> 0,
\qquad\text{and}\qquad C_0^0(x)=T_0(x) = 1.
\end{gather}
Furthermore, we have the special cases
\begin{gather*}
C_n^0(x)=\frac 2n T_n(x),\qquad n>0,
\end{gather*}
while
\begin{gather*} C_n^{\frac 12}(x)=P_n(x) \qquad
\text{and} \qquad C_n^1(x)=U_n(x), \qquad n\geq 0.
\end{gather*}
\end{Remark}

Before proving the theorems we state the following
technical lemma.

\begin{Lemma}
For $\lambda> 0$, $n\in \N_0$ and $x\in[-1,1]$,
\begin{gather}
\frac{d}{dx}\big\{ (1+x) C_n^\lambda(x) \big\}
 =
(n+1) C_n^\lambda(x) + 2\sum_{k=0}^{n-1} (k+\lambda) C_k^\lambda(x), \label{eq:1+x_formula} \\
\frac{d}{dx}\big\{ (1-x) C_n^\lambda(x) \big\}
 =
-(n+1) C_n^\lambda (x) + 2\sum_{k=0}^{n-1} (-1)^{k+n+1} (k+\lambda) C_k^\lambda(x).
\label{eq:1-x_formula}
\end{gather}
\end{Lemma}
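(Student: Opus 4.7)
The plan is to reduce both formulas to two well-known Gegenbauer identities followed by a telescoping step. The first input is the standard derivative formula $(C_n^\lambda)'(x) = 2\lambda\,C_{n-1}^{\lambda+1}(x)$ (with the convention $C_{-1}^\lambda \equiv 0$), which the authors already cite from \cite[10.9(23)]{Er53ii}. The second is the contiguous relation $C_n^{\lambda+1}(x) - C_{n-2}^{\lambda+1}(x) = \frac{n+\lambda}{\lambda}\,C_n^\lambda(x)$, also classical. Combining them gives
\begin{gather*}
\frac{d}{dx}\bigl[C_{k+1}^\lambda(x) - C_{k-1}^\lambda(x)\bigr] = 2(k+\lambda)\,C_k^\lambda(x),\qquad k\geq 0,
\end{gather*}
and telescoping from $k=0$ to $n-1$ (the $C_{-1}^\lambda$ and $C_0^\lambda$ boundary terms have vanishing derivative) yields
\begin{gather*}
\frac{d}{dx}\bigl[C_n^\lambda(x)+C_{n-1}^\lambda(x)\bigr] = 2\sum_{k=0}^{n-1}(k+\lambda)\,C_k^\lambda(x).
\end{gather*}

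The second preparatory identity I need is the structural relation $x(C_n^\lambda)'(x) - (C_{n-1}^\lambda)'(x) = n\,C_n^\lambda(x)$. Substituting $(C_j^\lambda)' = 2\lambda\,C_{j-1}^{\lambda+1}$ converts this to $2\lambda\bigl[x C_{n-1}^{\lambda+1}(x) - C_{n-2}^{\lambda+1}(x)\bigr] = n\,C_n^\lambda(x)$, which is a one-line consequence of the three-term recurrence $n C_n^{\lambda+1} = 2(n+\lambda) x C_{n-1}^{\lambda+1} - (n+2\lambda)C_{n-2}^{\lambda+1}$ followed by one more application of the contiguous relation above.

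Identity \eqref{eq:1+x_formula} now drops out by a short calculation using the product rule:
\begin{gather*}
\frac{d}{dx}\bigl[(1+x)C_n^\lambda(x)\bigr] = C_n^\lambda(x) + (C_n^\lambda)'(x) + x(C_n^\lambda)'(x) = (n+1)C_n^\lambda(x) + \frac{d}{dx}\bigl[C_n^\lambda(x)+C_{n-1}^\lambda(x)\bigr],
\end{gather*}
where the structural identity is used to replace $x(C_n^\lambda)'$ by $nC_n^\lambda + (C_{n-1}^\lambda)'$, and the bracketed derivative is then rewritten via the telescoped sum from the first step. Identity \eqref{eq:1-x_formula} follows immediately from \eqref{eq:1+x_formula} by applying the substitution $x \mapsto -x$, using the parity $C_n^\lambda(-x) = (-1)^n C_n^\lambda(x)$, and multiplying through by $(-1)^{n+1}$; this is exactly what converts the uniform-sign sum into the alternating sum with weight $(-1)^{k+n+1}$.

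There is no genuine analytic obstacle here; the whole argument is algebraic and term-by-term. The only real care point is bookkeeping at the bottom of the telescoping sum, which is why the convention $C_{-1}^\lambda \equiv 0$ is adopted so that the $k=0$ instance $(C_1^\lambda)'(x) = 2\lambda = 2(0+\lambda)C_0^\lambda(x)$ fits the pattern.
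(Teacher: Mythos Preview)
Your argument is correct. Both proofs derive \eqref{eq:1-x_formula} from \eqref{eq:1+x_formula} via the reflection formula $C_n^\lambda(-x)=(-1)^nC_n^\lambda(x)$, so that part is identical. For \eqref{eq:1+x_formula}, however, the paper proceeds by induction on $n$: it rewrites $(1+x)(C_n^\lambda)'=(1-x^2)(C_n^\lambda)'+(1+x)x(C_n^\lambda)'$, then feeds in the classical identities $(1-x^2)(C_n^\lambda)'=(n+2\lambda-1)C_{n-1}^\lambda-nxC_n^\lambda$ and $x(C_n^\lambda)'=(C_{n-1}^\lambda)'+nC_n^\lambda$ to reduce to $\frac{d}{dx}\{(1+x)C_{n-1}^\lambda\}$ plus lower-order terms, and invokes the induction hypothesis. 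Your route is direct rather than inductive: you isolate $\frac{d}{dx}[C_n^\lambda+C_{n-1}^\lambda]$ and evaluate it in one shot by telescoping the identity $\frac{d}{dx}[C_{k+1}^\lambda-C_{k-1}^\lambda]=2(k+\lambda)C_k^\lambda$, which in turn rests on the contiguous relation $C_k^{\lambda+1}-C_{k-2}^{\lambda+1}=\frac{k+\lambda}{\lambda}C_k^\lambda$. Both proofs hinge on the same structural relation $x(C_n^\lambda)'-(C_{n-1}^\lambda)'=nC_n^\lambda$; the trade-off is that the paper avoids the contiguous relation at the cost of an inductive unwinding, while your telescoping gives the sum in closed form immediately but imports one extra Gegenbauer identity.
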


\begin{proof}
Formula (\ref{eq:1-x_formula}) can be obtained from equation (\ref{eq:1+x_formula}) by using the ref\/lection formula
for Gegenbauer polynomials
\begin{gather}
\label{eq:reflection_formula}
C^\lambda_n(-x) = (-1)^n C^\lambda_n(x).
\end{gather}
and the change of variables $y=-x$.

For the proof of formula (\ref{eq:1+x_formula}) we will use two recurrences involving derivatives of Gegenbauer polynomials which can be found, for example in~\cite[10.9(35)]{Er53ii}. For notational convenience
we use the (non-standard) notation $D_n^\lambda(x)=\frac d{dx}C_n^\lambda(x)$ within
 the proof of the lemma. Then,
\begin{gather}
\label{formula:x_Cn_prime}
n C^\lambda_n(x) = x D^\lambda_n(x) - D^\lambda_{n-1}(x), \qquad\mbox{ and} \\
\label{formula:weight_Cn_prime}
\big(1-x^2\big)D^\lambda_n(x) =
\big(1-x^2\big) 2\lambda C_{n-1}^{\lambda+1}(x) =
(n+2\lambda -1) C^\lambda_{n-1}(x) - nx C^\lambda_n(x).
\end{gather}
Turn now to an induction proof of formula (\ref{eq:1+x_formula}). The statement is clearly true for
$C^\lambda_0(x)=1$, adopting the convention that the sum then is empty.

Now assume that $n\in \N$ and that the f\/irst statement is true for $n-1$. Consider
\begin{gather}
\label{start_of_induction}
\frac d{dx}\big\{ (1+x)C^\lambda_n(x)\big\} =
C_n^\lambda(x)+(1+x)D_n^\lambda(x).
\end{gather}
Using (\ref{formula:weight_Cn_prime}) and then (\ref{formula:x_Cn_prime}) we obtain that
\begin{gather*}
(1+x)D_n^\lambda(x)
 =
\big(1-x^2\big) D_n^\lambda(x) + (1+x)x D_n^\lambda(x) \\
\hphantom{(1+x)D_n^\lambda(x)}{} =
(n+2\lambda-1) C_{n-1}^\lambda(x) -nxC_n^\lambda(x)
+ (1+x) \big( n C_n^\lambda(x) + D_{n-1}^\lambda(x) \big) \\
\hphantom{(1+x)D_n^\lambda(x)}{} =
(n+2\lambda-1) C_{n-1}^\lambda(x) + n C_n^\lambda(x) + (1+x) D_{n-1}^\lambda(x).
\end{gather*}
Therefore, applying (\ref{start_of_induction}),
\begin{gather*}
\frac d{dx}\big\{ (1+x)C_n^\lambda(x)\big\}
 =
(n+2\lambda -1) C_{n-1}^\lambda(x)+(n+1) C_n^\lambda(x) + (1+x)D_{n-1}^\lambda(x) \\
\hphantom{\frac d{dx}\big\{ (1+x)C_n^\lambda(x)\big\}}{} =
(n+2\lambda-1) C_{n-1}^\lambda(x)+(n+1)C_n^\lambda(x)\\
\hphantom{\frac d{dx}\big\{ (1+x)C_n^\lambda(x)\big\}=}{}
+\frac d{dx}\big\{ (1+x) C_{n-1}^\lambda(x)\big\} - C_{n-1}^\lambda(x) \\
\hphantom{\frac d{dx}\big\{ (1+x)C_n^\lambda(x)\big\}}{}=
(n+2\lambda-2) C_{n-1}^\lambda(x) + (n+1) C_n^\lambda(x)
+ \frac d{dx}\big\{ (1+x) C_{n-1}^\lambda(x)\big\}.
\end{gather*}
Using the induction hypothesis gives
\begin{gather*}
\frac d{dx}\big\{ (1+x)C_n^\lambda(x)\big\}
 =
(n+1) C_n^\lambda(x) + (n+2\lambda-2) C_{n-1}^\lambda(x) \\
\hphantom{\frac d{dx}\big\{ (1+x)C_n^\lambda(x)\big\}=}{}
+ n C_{n-1}^\lambda(x)
+ 2\sum_{k=0}^{n-2} (k+\lambda) C_k^\lambda(x) \\
\hphantom{\frac d{dx}\big\{ (1+x)C_n^\lambda(x)\big\}}{}
 =
(n+1) C_n^\lambda(x) + 2\big[ (n-1)+\lambda \big] C_{n-1}^\lambda(x)
+ 2\sum_{k=0}^{n-2} (k+\lambda) C_k^\lambda(x),
\end{gather*}
which completes the proof.
\end{proof}

\begin{proof}[Proof of Theorem~\ref{thm:half_step_D_lambda_eq_0}]
Note that the continuity of the two functions $\CD^0_{\pm}f$ implies that of the functions $D^0_{\pm}f$.
Then, proceeding by integration by parts,
the coef\/f\/icient $b_n$ of $P_n$ in the formal Legendre expansion of $D_+^0f$ is $(2n+1)/2$ times
\begin{gather*}
H_+ =\int_{-1}^1 P_n(x) \left( D_{+}^{0} f \right)(x) dx \\
\hphantom{H_+}{} = \int_{-1}^1 P_n(x)(1+x) \frac{d}{dx} \left( \int_{-1}^x (x-t)^{-1/2} (1+t)^{-1/2} f(t) dt \right) dx\\
\hphantom{H_+}{}= \left[ P_n(x)(1+x) \int_{-1}^x (x-t)^{-1/2} (1+t)^{-1/2} f(t) dt \right]_{-1}^1 \\
\hphantom{H_+=}{} -\int_{-1}^1 \int_{-1}^x
(x-t)^{-1/2} (1+t)^{-1/2} f(t) dt
\frac{d}{dx} \big\{ P_n(x) (1+x) \big\} dx.
\end{gather*}
In view of the formula \cite[\href{http://dlmf.nist.gov/5.12.E1}{(5.12.1)}]{DLMF},
 \begin{gather*}
 \int_{-1}^x (x-t)^{-1/2}(1+t)^{-1/2} dt = \pi,
 \end{gather*}
 for all $ -1<x \leq1$, the limit as $x$ tends to $-1$ of the quantity within the square brackets vanishes. Hence,
 \begin{gather*}
H_+ =2P_n(1) \int_{-1}^1 \big(1-t^2\big)^{-1/2} f(t) dt \\
\hphantom{H_+ =}{} - \int_{-1}^1 \int_{t}^1 (x-t)^{-1/2} \frac{d}{dx} \{ P_n(x)(1+x) \} dx
 (1+t)^{-1/2} f(t) dt\\
\hphantom{H_+}{} = 2 \pi a_0 - \int_{-1}^1 \int_{t}^1 (x-t)^{-1/2}\left[ (n+1)P_n(x) +\sum_{k=0}^{n-1}
 (2k+1) P_k(x) \right] dx (1+t)^{-1/2} f(t) dt,
 \end{gather*}
 where the last step follows from an application of formula~\eqref{eq:1+x_formula}.

Noting the relationship
(cf.\ \cite[\href{http://dlmf.nist.gov/18.17.E46}{(18.17.46)}]{DLMF})
\begin{gather}
\label{eq:formula_Abramowitz}
\int_{t}^1 (x-t)^{-1/2} P_k (x) dx = \frac{1}{(k+\frac{1}{2})} \frac{1}{\sqrt{1-t}}
\big[T_k(t) -T_{k+1}(t) \big],
\end{gather}
after some straightforward calculation,
the double integral above turns into the form
\begin{gather*}
H_+ =
\frac{2n}{2n+1} \int_{-1}^1 \big(1-t^2\big)^{-\frac 12} T_n(t)
f(t) dt +
\frac{2(n+1)}{2n+1}
\int_{-1}^1 \big(1-t^2\big)^{-\frac 12} T_{n+1}(t)
f(t) dt.
\end{gather*}
Therefore,
\begin{gather*}
H_+ = \frac{n}{2n+1} \pi a_n +
\frac{n+1}{2n+1} \pi a_{n+1}.
\end{gather*}
Analogously, we def\/ine
\begin{gather*}
H_- = \int_{-1}^1 P_n(x) \left( D_{-}^{0} f\right)(x) dx.
\end{gather*}
A similar integration by parts argument, but now using the formula \eqref{eq:1-x_formula}, and the relationship
(cf.\ \cite[\href{http://dlmf.nist.gov/18.17.E45}{(18.17.45)}]{DLMF})
\begin{gather}
\label{eq:formula_Abramowitz2}
\int_{-1}^t (t-x)^{-1/2}P_k(x) dx = \frac{1}{(k+\frac{1}{2})} \frac{1}{\sqrt{1+t}}\big[T_k(t)+T_{k+1}(t) \big],
\end{gather}
shows that
\begin{gather*}
H_- = -\frac{n}{2n+1} \pi a_n + \frac{n+1}{2n+1} \pi a_{n+1}.
\end{gather*}
Since $b_n=\frac{2n+1}{2} (H_+ + H_-)$
we f\/inally obtain
\begin{gather*}
b_{n}= (n+1) \pi a_{n+1}, \qquad n \in \N_0 .
\end{gather*}
This completes the proof of the part of the theorem concerning $\CD^{0}_{+}f$.
The proof of the part of the theorem concerning $\CD^{0}_{-}f $ is similar and will be omitted.
\end{proof}

The proof of Theorem~\ref{thm:half_step_D_general_case}
relies on a kind of fractional integration by parts.
Before going into details, we will state some technical
lemmas.

\begin{Lemma}\label{lem:deriv_involving_Q} For $\lambda \geq 1/2$, $n \in \N_0$ and $x\in [-1,1]$,
\begin{gather} \label{eq:deriv_involving_Q}
\frac{d}{dx} \big\{ (1+x)^{\lambda+1} (1-x)^\lambda C^{\lambda +\half}_n (x)\big\} =
(1+x)\big(1-x^2\big)^{\lambda-1} Q_{n+1}(x),
\end{gather}
where
\begin{gather*}
Q_{n+1}(x) = (1-x)C_n^{\lambda+\frac 12}(x)
 -(n+1)\frac{2\lambda+n}{2\lambda-1}
 C_{n+1}^{\lambda-\frac 12}(x).
\end{gather*}
\end{Lemma}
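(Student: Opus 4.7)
The plan is to reduce the identity to a polynomial identity in one unknown variable by extracting the common weight factor, and then to verify the resulting polynomial identity using two standard Gegenbauer recurrences already recalled in the proof of the preceding lemma.

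First I would apply the product rule to the left-hand side, obtaining
\begin{gather*}
\frac{d}{dx}\bigl\{(1+x)^{\lambda+1}(1-x)^\lambda C_n^{\lambda+\frac12}(x)\bigr\}
= (\lambda+1)(1+x)^\lambda(1-x)^\lambda C_n^{\lambda+\frac12}(x)\\
\qquad{} - \lambda(1+x)^{\lambda+1}(1-x)^{\lambda-1}C_n^{\lambda+\frac12}(x)
+ (1+x)^{\lambda+1}(1-x)^\lambda \bigl(C_n^{\lambda+\frac12}\bigr)'(x).
\end{gather*}
Noting that $(1+x)(1-x^2)^{\lambda-1}=(1+x)^\lambda(1-x)^{\lambda-1}$, I would factor this common weight out of both sides. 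Since $(\lambda+1)(1-x)-\lambda(1+x)=1-(2\lambda+1)x$, the claim reduces to the purely polynomial identity
\begin{gather*}
\bigl[1-(2\lambda+1)x\bigr]C_n^{\lambda+\frac12}(x) + (1-x^2)\bigl(C_n^{\lambda+\frac12}\bigr)'(x) = Q_{n+1}(x).
\end{gather*}

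Next I would eliminate the derivative via the Gegenbauer differential recurrence used in the previous lemma's proof, namely $(1-x^2)\bigl(C_n^\mu\bigr)'(x) = (n+2\mu-1)C_{n-1}^\mu(x)-nxC_n^\mu(x)$, with $\mu=\lambda+\tfrac12$. Collecting the coefficients of $C_n^{\lambda+\frac12}(x)$, the left-hand side becomes
\begin{gather*}
(1-x)C_n^{\lambda+\frac12}(x) - (n+2\lambda)\bigl[x C_n^{\lambda+\frac12}(x)-C_{n-1}^{\lambda+\frac12}(x)\bigr].
\end{gather*}
Matching against the definition of $Q_{n+1}$ then reduces everything to the auxiliary identity
\begin{gather*}
x C_n^{\lambda+\frac12}(x) - C_{n-1}^{\lambda+\frac12}(x) = \frac{n+1}{2\lambda-1}C_{n+1}^{\lambda-\frac12}(x).
\end{gather*}

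To verify this, I would use $\tfrac{d}{dx}C_m^\mu(x) = 2\mu\, C_{m-1}^{\mu+1}(x)$ with $\mu=\lambda-\tfrac12$ to express
$C_n^{\lambda+\frac12}(x) = \tfrac{1}{2\lambda-1}\bigl(C_{n+1}^{\lambda-\frac12}\bigr)'(x)$ and $C_{n-1}^{\lambda+\frac12}(x) = \tfrac{1}{2\lambda-1}\bigl(C_n^{\lambda-\frac12}\bigr)'(x)$. Multiplying through by $2\lambda-1$, the auxiliary identity becomes
\begin{gather*}
x\bigl(C_{n+1}^{\lambda-\frac12}\bigr)'(x) - \bigl(C_n^{\lambda-\frac12}\bigr)'(x) = (n+1)C_{n+1}^{\lambda-\frac12}(x),
\end{gather*}
which is precisely formula~\eqref{formula:x_Cn_prime} applied with parameter $\lambda-\tfrac12$ and index $n+1$. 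The boundary value $\lambda=\tfrac12$ is interpreted in the limiting sense of~\eqref{eq:limit_lambda_to_zero}, under which $C_{n+1}^{\lambda-\frac12}(x)/(2\lambda-1)$ extends continuously.

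No step is technically difficult once the correct common factor $(1+x)^\lambda(1-x)^{\lambda-1}$ is pulled out; the only mild obstacle is recognising that the residual identity, after applying the first-order differential recurrence, is exactly the second standard Gegenbauer recurrence with shifted parameter. The careful bookkeeping of the shift $\lambda\mapsto\lambda-\tfrac12$ and the index shift $n\mapsto n+1$ is what makes the calculation work out cleanly.
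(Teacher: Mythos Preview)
Your argument is correct. The reduction to the polynomial identity
\[
[1-(2\lambda+1)x]\,C_n^{\lambda+\frac12}(x)+(1-x^2)\bigl(C_n^{\lambda+\frac12}\bigr)'(x)=Q_{n+1}(x)
\]
is valid, and your subsequent verification via \eqref{formula:weight_Cn_prime} and then \eqref{formula:x_Cn_prime} (with the parameter shift $\lambda\mapsto\lambda-\tfrac12$) checks out line by line, including the limiting interpretation at $\lambda=\tfrac12$.

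The paper proceeds differently. Instead of differentiating the three factors separately, it writes $(1+x)^{\lambda+1}(1-x)^\lambda=(1+x)(1-x^2)^\lambda$ and applies the product rule once, leaving the block $(1-x^2)^\lambda C_n^{\lambda+\frac12}(x)$ intact. It then invokes the single weighted derivative formula
\[
\frac{d}{dx}\bigl\{(1-x^2)^\lambda C_n^{\lambda+\frac12}(x)\bigr\}
=-(n+1)\Bigl(1+\tfrac{n+1}{2\lambda-1}\Bigr)(1-x^2)^{\lambda-1}C_{n+1}^{\lambda-\frac12}(x),
\]
which is read off from a tabulated integral identity for Gegenbauer polynomials. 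This gives $Q_{n+1}$ in one stroke. Your route avoids looking up that formula and instead rebuilds it from the two elementary first-order recurrences already used in the preceding lemma; the price is a slightly longer chain of algebraic reductions, but the benefit is that the proof stays entirely self-contained within the recurrences \eqref{formula:x_Cn_prime}, \eqref{formula:weight_Cn_prime} and the derivative rule $\tfrac{d}{dx}C_m^\mu=2\mu\,C_{m-1}^{\mu+1}$.
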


\begin{proof}
Note the formula (see \cite[(22.13.2)]{Abramowitz} or
 \cite[\href{http://dlmf.nist.gov/18.17.E1}{(18.17.1)}]{DLMF} for the general Jacobi case)
\begin{gather*}
n \left(1 +\frac{n}{2\lambda} \right)
\int_0^x \big(1-t^2\big)^{\lambda-\half} C^\lambda_n (t) dt = C^{\lambda+1}_{n-1}(0) - \big(1-x^2\big)^{\lambda+\half} C^{\lambda+1}_{n-1}(x),
\end{gather*}
which implies
\begin{gather*}
\frac{d}{dx} \big\{ \big(1-x^2\big)^\lambda C^{\lambda+\half}_n(x) \big\} =-(n+1)\left(1 +\frac{n+1}{2(\lambda-\half)}\right)
 \big(1-x^2\big)^{\lambda-1} C^{\lambda-\half}_{n+1}(x), \qquad \lambda > 1/2.
 \end{gather*}

Employing the relationship above, computing the derivative on the left hand
side of \eqref{eq:deriv_involving_Q} yields
\begin{gather*}
\frac{d}{dx} \big\{ \big(1-x^2\big)^\lambda
C^{\lambda+\half}_n (x)(1+x) \big\}
 = (1-x^2)^\lambda C^{\lambda+\half}_n(x)
+(1+x)\frac{d}{dx}\big\{ \big(1-x^2\big)^\lambda C^{\lambda+\half}_n(x)\big\} \\
 =
\big(1-x^2\big)^\lambda C^{\lambda+\half}_n(x)
- (1+x) (n+1) \left(1+\frac{n+1}{2\lambda-1}\right) \big(1-x^2\big)^{\lambda-1} C^{\lambda-\half}_{n+1}(x)\\
 =
(1+x)\big(1-x^2\big)^{\lambda-1}\left\{
(1-x)C^{\lambda+\half}_n(x)
 - (n+1) \left(1 + \frac{n+1}{2\lambda-1} \right)C^{\lambda-\half}_{n+1} (x) \right\} ,
\qquad \lambda > 1/2.
\end{gather*}
Setting
\begin{gather*}
Q_{n+1}(x) = (1-x) C^{\lambda+\half}_n(x) -
 (n+1)\left(1 + \frac{n+1}{2\lambda-1}\right) C^{\lambda-\half}_{n+1}(x) ,
 \end{gather*}
 completes the proof when $\lambda > 1/2$. The limit relation (\ref{eq:limit_lambda_to_zero}) implies the result for $\lambda=1/2$.
\end{proof}

\begin{Lemma}
\label{lem:Gegenbauer_identity}
Let $\lambda>1$, $n\in{\mathbb N}$ and
$x\in[-1,1]$. Then
\begin{gather} \label{eq:geg_identity_1}
(n+2\lambda-1) C_{n+1}^{\lambda-1}(x) -
(n+2)C_{n+2}^{\lambda-1}(x) =
(2\lambda-2)(1-x)\big[ C_{n+1}^\lambda(x)+C_n^\lambda(x)\big].
\end{gather}
\end{Lemma}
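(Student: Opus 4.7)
The plan is to reduce the identity to two classical ``ladder'' relations that express $C_n^{\lambda-1}$ in terms of $C^\lambda$-polynomials one degree higher. Specifically, I will use the pair
\begin{gather*}
n C_n^\mu(x) = 2\mu\bigl[x C_{n-1}^{\mu+1}(x) - C_{n-2}^{\mu+1}(x)\bigr], \\
(n+2\mu) C_n^\mu(x) = 2\mu\bigl[C_n^{\mu+1}(x) - x C_{n-1}^{\mu+1}(x)\bigr],
\end{gather*}
both of which are standard (they can be found in \cite[10.9]{Er53ii}, in the same section as the recurrences already used in the paper; alternatively, they drop out of the generating function identity $(1-2xt+t^2)^{-\mu} = (1-2xt+t^2)(1-2xt+t^2)^{-(\mu+1)}$ together with its $t$-derivative).

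With these in hand, the identity is essentially algebra. Taking the second relation with $\mu = \lambda-1$ and $n$ replaced by $n+1$ gives
\begin{gather*}
(n+2\lambda-1) C_{n+1}^{\lambda-1}(x) = 2(\lambda-1)\bigl[C_{n+1}^\lambda(x) - x\, C_n^\lambda(x)\bigr],
\end{gather*}
while the first relation with $\mu = \lambda-1$ and $n$ replaced by $n+2$ gives
\begin{gather*}
(n+2) C_{n+2}^{\lambda-1}(x) = 2(\lambda-1)\bigl[x\, C_{n+1}^\lambda(x) - C_n^\lambda(x)\bigr].
\end{gather*}
Subtracting the second from the first and factoring $(1-x)$ out of the bracket produces exactly $(2\lambda-2)(1-x)\bigl[C_{n+1}^\lambda(x)+C_n^\lambda(x)\bigr]$, which is the right-hand side of~\eqref{eq:geg_identity_1}.

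There is no real obstacle here; the only mildly delicate point is choosing the correct two identities (and the correct shifts of $n$) so that the coefficients $(n+2\lambda-1)$ and $(n+2)$ on the left of \eqref{eq:geg_identity_1} are produced without stray terms. Once the two relations above are aligned as shown, the factor $2(\lambda-1)$ and the factor $(1-x)$ both emerge from a single cancellation, which is why the statement takes this particularly clean form. This also explains the hypothesis $\lambda > 1$, since the manipulation relies on dividing through by $2(\lambda-1)$ implicitly (and $C^{\lambda-1}$ must itself be meaningful).
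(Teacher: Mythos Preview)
Your proof is correct. The two ladder relations you quote are standard and follow exactly as you sketch from the generating function; once they are in hand, the subtraction gives the result in one line.

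The paper takes a different route: it first rewrites $(2\lambda-2)\bigl[C_{n+1}^\lambda + C_n^\lambda\bigr]$ as the sum of derivatives $\frac{d}{dx}C_{n+2}^{\lambda-1} + \frac{d}{dx}C_{n+1}^{\lambda-1}$ via $\frac{d}{dx}C_m^\mu = 2\mu\, C_{m-1}^{\mu+1}$, then multiplies by $(1-x)$ and simplifies using the differential recurrences $\frac{d}{dx}C_{m+1}^\mu = x\frac{d}{dx}C_m^\mu + (2\mu+m)C_m^\mu$ and $(1-x^2)\frac{d}{dx}C_m^\mu = (m+2\mu)xC_m^\mu - (m+1)C_{m+1}^\mu$. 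Your argument is shorter and more transparent: by selecting ladder identities that raise the upper parameter algebraically, you avoid the derivative detour entirely, and the factor $(1-x)$ drops out of a single cancellation rather than from combining two differential recurrences. One small quibble with your closing remark: nothing is actually divided by $2(\lambda-1)$ --- the factor appears only multiplicatively --- so the genuine reason for the hypothesis $\lambda>1$ is that your generating-function recurrences require $\mu=\lambda-1>0$; the boundary case $\lambda=1$ is treated separately in the paper's subsequent Proposition via the limit~\eqref{eq:limit_lambda_to_zero}.
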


\begin{proof}
Since $\frac d{dx} C_n^\lambda(x)=2\lambda
C_{n-1}^{\lambda+1}(x)$, we have that
\begin{gather*}
(2\lambda-2)\big[ C_{n+1}^\lambda(x) + C_n^\lambda(x)\big]
 =
\frac d{dx} C_{n+2}^{\lambda-1}(x) +
\frac d{dx} C_{n+1}^{\lambda-1}(x).
\end{gather*}
Using (cf.~\cite[10.9(25), (15)]{Er53ii})
\begin{gather*}
\frac d{dx} C_{n+1}^\lambda(x) =
x \frac d{dx} C_n^\lambda(x) + (2\lambda+n) C_n^\lambda(x)
\end{gather*}
and
\begin{gather*}
\big(1-x^2\big) \frac d{dx} C_n^\lambda(x) =
(n+2\lambda)x C_n^\lambda(x) - (n+1) C_{n+1}^\lambda(x)
\end{gather*}
we can proceed, obtaining
\begin{gather*}
(1-x)(2\lambda-2)\big[ C_{n+1}^\lambda(x) + C_n^\lambda(x)\big] =
(1-x)(2\lambda+n-1)C_{n+1}^{\lambda-1}(x) \\
\hphantom{(1-x)(2\lambda-2)\big[ C_{n+1}^\lambda(x) + C_n^\lambda(x)\big] =}{} +
(n+2\lambda-1)x C_{n+1}^{\lambda-1}(x) - (n+2) C_{n+1}^{\lambda-1}(x),
\end{gather*}
from which the statement follows.
\end{proof}

The following proposition is the limit case of~(\ref{eq:geg_identity_1}) taking the limit $\lambda\to 1^+$
after multiplying either side with $1/(\lambda-1)$.

\begin{Proposition}
Let $n\in\N_0$ and $x\in[-1,1]$. Then
\begin{gather*}
T_{n+1}(x) - T_{n+2}(x) =
(1-x) \big[ U_{n+1}(x)+U_n(x) \big].
\end{gather*}
\end{Proposition}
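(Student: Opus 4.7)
The plan is to derive the identity as the $\lambda\to 1^+$ limit of equation~\eqref{eq:geg_identity_1} in Lemma~\ref{lem:Gegenbauer_identity}, exactly as indicated by the sentence preceding the proposition. First I would rewrite~\eqref{eq:geg_identity_1}, which is valid for every $\lambda>1$ and every $x\in[-1,1]$, after dividing both sides by $2(\lambda-1)$:
\begin{gather*}
\frac{(n+2\lambda-1)\, C_{n+1}^{\lambda-1}(x)-(n+2)\, C_{n+2}^{\lambda-1}(x)}{2(\lambda-1)}
=(1-x)\bigl[C_{n+1}^{\lambda}(x)+C_n^{\lambda}(x)\bigr].
\end{gather*}
Both sides are polynomials in $x$ of bounded degree whose coefficients depend rationally on $\lambda$, so taking the limit $\lambda\to 1^+$ is a purely pointwise operation in $x$.

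On the right-hand side the map $\lambda\mapsto C_k^{\lambda}(x)$ is continuous and $C_k^{1}=U_k$, so the limit is $(1-x)\bigl[U_{n+1}(x)+U_n(x)\bigr]$. For the left-hand side I would set $\mu=\lambda-1$ and invoke the limit relation~\eqref{eq:limit_lambda_to_zero} together with the special case $C_k^{0}(x)=\tfrac{2}{k}T_k(x)$ for $k\geq 1$. Since $n+1$ and $n+2$ are both positive, this gives
\begin{gather*}
\lim_{\mu\to 0^+}\frac{1}{\mu}\,C_{n+1}^{\mu}(x)=\frac{2}{n+1}\,T_{n+1}(x),\qquad
\lim_{\mu\to 0^+}\frac{1}{\mu}\,C_{n+2}^{\mu}(x)=\frac{2}{n+2}\,T_{n+2}(x).
\end{gather*}
Combining these with the scalar prefactors $(n+2\lambda-1)\to n+1$ and $(n+2)\to n+2$ collapses the left-hand side limit to $T_{n+1}(x)-T_{n+2}(x)$, producing the claimed identity.

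The only point that requires any attention is that the apparent simple pole at $\lambda=1$ on the left-hand side is removable; this is precisely what the limit definition~\eqref{eq:limit_lambda_to_zero} of $C_k^{0}$ is designed to express, so no real obstacle arises. A reader wishing a self-contained verification bypassing Lemma~\ref{lem:Gegenbauer_identity} may instead substitute $x=\cos\theta$ and check the equality directly through the sum-to-product formulas applied to $\cos((n+1)\theta)-\cos((n+2)\theta)$ and to $\sin((n+1)\theta)+\sin((n+2)\theta)$, together with $1-\cos\theta=2\sin^2(\theta/2)$; but the limit route is the one naturally suggested by the preceding lemma.
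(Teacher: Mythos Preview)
Your proof is correct and, amusingly, carries out precisely the argument the paper \emph{announces} in the sentence preceding the proposition --- yet the paper's own proof does not take that limit route. Instead, the paper verifies the identity directly from three standard Chebyshev relations, $T_{n+1}=U_{n+1}-xU_n$, $xU_{n+1}=U_{n+2}-T_{n+2}$, and $2T_{n+2}=U_{n+2}-U_n$, by rewriting the right-hand side in terms of Chebyshev polynomials of the first kind. Your limit argument has the merit of making the proposition a genuine corollary of Lemma~\ref{lem:Gegenbauer_identity}, requiring no new input beyond the limit relation~\eqref{eq:limit_lambda_to_zero}; the paper's direct verification is more self-contained and does not lean on that limit. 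One small bookkeeping point: Lemma~\ref{lem:Gegenbauer_identity} is stated for $n\in\N$, whereas the proposition is stated for $n\in\N_0$, so strictly speaking your limit argument does not cover $n=0$. This is easily patched --- the identity~\eqref{eq:geg_identity_1} in fact holds for $n=0$ as well, or one may simply verify $T_1-T_2=(1-x)(U_1+U_0)$ by hand.
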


\begin{proof} From \cite[10.11(3)]{Er53ii} we have
that
\begin{gather*}
T_{n+1}(x) = U_{n+1}(x) - x U_n(x).
\end{gather*}
Furthermore, \cite[10.11(37)]{Er53ii} yields
\begin{gather*}
x U_{n+1}(x) = U_{n+2}(x) - T_{n+2}(x), \qquad
2 T_{n+2}(x) = U_{n+2}(x) - U_n(x).
\end{gather*}
The claim follows from using these relations to rewrite
the right hand side in terms of Chebyshev polynomials of the
f\/irst kind.
\end{proof}

The following lemma states the analogues of
equations~(\ref{eq:formula_Abramowitz})
and (\ref{eq:formula_Abramowitz2}) for general
$\lambda>0$. Note that the f\/irst integral given in the lemma
is a special case of \emph{Bateman's integral}
(see \cite[\href{http://dlmf.nist.gov/18.17.E9}{(18.17.9)}]{DLMF}).
Using this, the statement follows from
\cite[\href{http://dlmf.nist.gov/18.9.E4}{(18.9.4)}]{DLMF}. Rather
than using Bateman's integral for Jacobi polynomials, we provide a proof
staying within the family of Gegenbauer polynomials, only.

\begin{Lemma}
\label{lem:frac_integral_Gegenbauer}
Let $\lambda>0$, $n\in{\mathbb N}_0$ and $x\in[-1,1]$. Then
\begin{gather*}
\frac 1{\Gamma(\lambda)}
\int_x^1 (1-t)^\lambda (t-x)^{-\frac 12}
C_n^{\lambda+\frac 12}(t) dt \\
\qquad{} =
\frac{\sqrt\pi}
 {2\Gamma\left(\lambda+\frac 12\right)
 \left(n+\lambda+\frac 12\right)}
 (1-x)^{\lambda-\frac 12}
\big[ (n+2\lambda)C_n^\lambda(x)
- (n+1)C_{n+1}^\lambda(x)\big],
\end{gather*}
and
\begin{gather*}
\frac 1{\Gamma(\lambda)}
\int_{-1}^x (1+t)^\lambda (x-t)^{-\frac 12}
C_n^{\lambda+\frac 12}(t) dt \\
\qquad{} = \frac{\sqrt\pi}
 {2\Gamma\left(\lambda+\frac 12\right)
 \left(n+\lambda+\frac 12\right)}
 (1+x)^{\lambda-\frac 12}
\big[ (n+2\lambda)C_n^\lambda(x)
+ (n+1)C_{n+1}^\lambda(x)\big].
\end{gather*}
\end{Lemma}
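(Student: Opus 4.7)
The plan is to prove only the first identity; the second follows by the change of variables $(t,x)\mapsto(-t,-x)$ combined with the reflection formula \eqref{eq:reflection_formula} applied to $C_n^{\lambda+1/2}$. Denote the first integral by $J$. The substitution $t=1-(1-x)u$ transforms $J$ into
\begin{gather*}
J=(1-x)^{\lambda+1/2}\int_0^1 u^\lambda(1-u)^{-1/2}\,C_n^{\lambda+1/2}\bigl(1-(1-x)u\bigr)\,du.
\end{gather*}
I would then insert the hypergeometric representation $C_n^{\lambda+1/2}(1-2s)=\tfrac{(2\lambda+1)_n}{n!}\,{}_2F_1(-n,n+2\lambda+1;\lambda+1;s)$ with $s=(1-x)u/2$, expand as a polynomial in $s$, and integrate term by term via the beta integral $\int_0^1 u^{\lambda+k}(1-u)^{-1/2}du=B(\lambda+k+1,\tfrac12)$. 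Because $\Gamma(\lambda+k+1)/\Gamma(\lambda+k+3/2)=\Gamma(\lambda+1)(\lambda+1)_k/[\Gamma(\lambda+3/2)(\lambda+3/2)_k]$, the denominator factor $(\lambda+1)_k$ cancels, yielding
\begin{gather*}
J=\frac{\sqrt{\pi}\,\Gamma(\lambda+1)(2\lambda+1)_n}{n!\,\Gamma(\lambda+3/2)}\,(1-x)^{\lambda+1/2}\,{}_2F_1\!\left(-n,n+2\lambda+1;\lambda+\tfrac{3}{2};\tfrac{1-x}{2}\right).
\end{gather*}

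Next, I would recast the Gegenbauer combination on the right-hand side of the claim via Lemma~\ref{lem:Gegenbauer_identity}, applied with $\lambda\mapsto\lambda+1$ and $n\mapsto n-1$, valid for $\lambda>0$ and, with the convention $C_{-1}^{\lambda+1}\equiv 0$, for every $n\geq 0$:
\begin{gather*}
(n+2\lambda)C_n^\lambda(x)-(n+1)C_{n+1}^\lambda(x)=2\lambda(1-x)\bigl[C_n^{\lambda+1}(x)+C_{n-1}^{\lambda+1}(x)\bigr].
\end{gather*}
The extra factor $(1-x)$ raises the exponent $\lambda-1/2$ in the claim to $\lambda+1/2$, matching precisely the factor in the formula above for $J$. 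Inserting $C_m^{\lambda+1}(x)=\tfrac{(2\lambda+2)_m}{m!}\,{}_2F_1(-m,m+2\lambda+2;\lambda+\tfrac{3}{2};\tfrac{1-x}{2})$ for $m=n,n-1$ and simplifying the Pochhammer ratios reduces the desired equality to the single hypergeometric identity
\begin{gather*}
(2n+2\lambda+1)\,{}_2F_1\!\left(-n,n+2\lambda+1;\lambda+\tfrac{3}{2};z\right)\\
\qquad{}=(n+2\lambda+1)\,{}_2F_1\!\left(-n,n+2\lambda+2;\lambda+\tfrac{3}{2};z\right)+n\,{}_2F_1\!\left(-(n-1),n+2\lambda+1;\lambda+\tfrac{3}{2};z\right).
\end{gather*}
This is the contiguous relation $(b-a)\,{}_2F_1(a,b;c;z)=b\,{}_2F_1(a,b+1;c;z)-a\,{}_2F_1(a+1,b;c;z)$ specialised at $a=-n$, $b=n+2\lambda+1$, $c=\lambda+\tfrac{3}{2}$, and is immediate from coefficient comparison using $(a+1)_k/(a)_k=(a+k)/a$ and $(b+1)_k/(b)_k=(b+k)/b$.

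The main obstacle is the Pochhammer-factor bookkeeping through the two reductions. The decisive move is the use of Lemma~\ref{lem:Gegenbauer_identity} to lift the target combination from parameter $\lambda$ to parameter $\lambda+1$: the ${}_2F_1$ representation of $C_m^{\lambda+1}$ then shares the lower parameter $\lambda+\tfrac{3}{2}$ with the ${}_2F_1$ produced by the integral, so the matching collapses to a single contiguous relation. A direct attempt to express the integral in terms of $C_n^\lambda$ and $C_{n+1}^\lambda$ would require mixing hypergeometric functions with different lower parameters and would be considerably more involved.
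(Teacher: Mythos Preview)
Your argument is correct. The integral computation in your first four displayed lines reproduces, in effect, the formula~\eqref{eq:Iminus_on_C} of Proposition~\ref{prop:Ioperators} (note $J=(1-x)^{\lambda-1/2}I_-^\lambda C_n^{\lambda+1/2}(x)$), and the contiguous relation you invoke at the end is indeed immediate by coefficient comparison. Your extension of Lemma~\ref{lem:Gegenbauer_identity} to the shifted index $n-1\ge -1$ is also easily checked.

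The route, however, differs from the paper's. The paper's proof is a two-line forward reference to Theorem~\ref{thm:action_on_polynomials}: since $2I_-^\lambda=\CI_+^\lambda-\CI_-^\lambda$, the lemma follows at once from \eqref{eq:curlyI_plus_on_C} and \eqref{eq:curlyI_minus_on_C}. That theorem, in turn, is obtained by computing \emph{both} $I_+^\lambda C_n^{\lambda+1/2}$ and $I_-^\lambda C_n^{\lambda+1/2}$ as hypergeometric polynomials (Proposition~\ref{prop:Ioperators}) and combining them via the contiguous relation~[Abramowitz~(15.2.25)], which mixes lower parameters $\lambda+\tfrac12$ and $\lambda+\tfrac32$. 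You bypass $I_+^\lambda$ entirely: by first lifting the right-hand side with Lemma~\ref{lem:Gegenbauer_identity} from parameter~$\lambda$ to parameter~$\lambda+1$, both sides live at the \emph{same} lower parameter $\lambda+\tfrac32$, and the matching collapses to the simpler contiguous relation $(b-a)F(a,b;c;z)=bF(a,b{+}1;c;z)-aF(a{+}1,b;c;z)$. Your approach is thus self-contained within Section~2 (no forward reference), at the cost of invoking Lemma~\ref{lem:Gegenbauer_identity}, which the paper reserves for Corollary~\ref{cor:frac_integral_Gegenbauer}; the paper's approach packages the work into the single Theorem~\ref{thm:action_on_polynomials}, which it needs anyway.
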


\begin{proof}
From the def\/inition we have that
\begin{gather*}
\int_x^1 (1-t)^\lambda (t-x)^{-\frac 12}
C_n^{\lambda+\frac 12}(t) dt =
(1-x)^{\lambda-\frac 12} I_-^\lambda C_n^{\lambda+\frac 12}(x),
\end{gather*}
and $2I_-^\lambda=\CI_+^\lambda - \CI_-^\lambda$. We can
therefore use Theorem~\ref{thm:action_on_polynomials} below to
compute
\begin{gather*}
2I_-^\lambda C_n^{\lambda+\frac 12}(x) =
\frac{\sqrt\pi\Gamma(\lambda)}
 {\Gamma\left(\lambda+\frac 12\right)
 \left(n+\lambda+\frac 12\right)}
\big[(n+2\lambda)C_n^\lambda(x) -
 (n+1) C_{n+1}^\lambda(x) \big],
\end{gather*}
from which the f\/irst statement follows.
The second integral follows from the f\/irst by a change of variables $\tau=-t$ and using the ref\/lection formula~(\ref{eq:reflection_formula}).
\end{proof}

\begin{Remark} Taking the limit (\ref{eq:limit_lambda_to_zero})
readily leads to~(\ref{eq:formula_Abramowitz}) and~(\ref{eq:formula_Abramowitz2}).
\end{Remark}

\begin{Corollary}
\label{cor:frac_integral_Gegenbauer}
Let $\lambda\geq 1$, $n\in{\mathbb N}_0$ and $x\in[-1,1]$. Then
\begin{gather*}
\frac 1{\Gamma(\lambda)}
\int_x^1 (1-t)^{\lambda-1} (t-x)^{-\frac 12}
C_{n+1}^{\lambda-\frac 12}(t) dt\\
\qquad{} =
\frac{\sqrt\pi}
 {\Gamma\left(\lambda-\frac 12\right)
 \left(n+\lambda+\frac 12\right)}
 (1-x)^{\lambda-\frac 12}
\big[ C_{n+1}^\lambda(x)
+ C_n^\lambda(x)\big],
\end{gather*}
and
\begin{gather*}
\frac 1{\Gamma(\lambda)}
\int_{-1}^x (1+t)^{\lambda-1} (x-t)^{-\frac 12}
C_{n+1}^{\lambda-\frac 12}(t) dt\\
\qquad{}
=
\frac{\sqrt\pi}
 {\Gamma\left(\lambda-\frac 12\right)
 \left(n+\lambda+\frac 12\right)}
 (1+x)^{\lambda-\frac 12}
\big[ C_{n+1}^\lambda(x)
- C_n^\lambda(x)\big].
\end{gather*}
\end{Corollary}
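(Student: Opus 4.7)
The corollary follows directly from Lemma \ref{lem:frac_integral_Gegenbauer} and the Gegenbauer identity in Lemma \ref{lem:Gegenbauer_identity}. The plan is to take the formulas in Lemma \ref{lem:frac_integral_Gegenbauer}, which are stated for a Gegenbauer polynomial of upper index $\lambda+\frac12$, and apply them with the substitution $\lambda \mapsto \lambda-1$, $n \mapsto n+1$. This rewrites the integrals in terms of $C_{n+1}^{\lambda-1}$ and $C_{n+2}^{\lambda-1}$, after which Lemma \ref{lem:Gegenbauer_identity} collapses the resulting combination into one involving $C_{n+1}^\lambda$ and $C_n^\lambda$, producing an extra factor $(1-x)$ that upgrades the weight from $(1-x)^{\lambda-3/2}$ to $(1-x)^{\lambda-1/2}$ as required.

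In more detail, first assume $\lambda>1$. Substituting $(\lambda,n)\mapsto(\lambda-1,n+1)$ in the first identity of Lemma \ref{lem:frac_integral_Gegenbauer} yields
\begin{gather*}
\frac{1}{\Gamma(\lambda-1)}\int_x^1 (1-t)^{\lambda-1}(t-x)^{-\frac12} C_{n+1}^{\lambda-\frac12}(t)\,dt \\
\qquad = \frac{\sqrt\pi}{2\,\Gamma(\lambda-\frac12)(n+\lambda+\frac12)}(1-x)^{\lambda-\frac32}\bigl[(n+2\lambda-1)C_{n+1}^{\lambda-1}(x) - (n+2)C_{n+2}^{\lambda-1}(x)\bigr].
\end{gather*}
By Lemma \ref{lem:Gegenbauer_identity} the bracketed expression equals $(2\lambda-2)(1-x)\bigl[C_{n+1}^\lambda(x)+C_n^\lambda(x)\bigr]$. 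Multiplying through by $\Gamma(\lambda-1)/\Gamma(\lambda) = 1/(\lambda-1)$ and simplifying the constant $(2\lambda-2)/\bigl(2(\lambda-1)\bigr)=1$ produces the first claimed identity. The second claim follows in the same way from the second identity of Lemma \ref{lem:frac_integral_Gegenbauer} together with the $x\mapsto -x$ version of Lemma \ref{lem:Gegenbauer_identity}, obtained using the reflection formula~\eqref{eq:reflection_formula}; alternatively, one performs the change of variables $\tau=-t$ in the first identity, exactly as in the proof of Lemma \ref{lem:frac_integral_Gegenbauer}.

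The only subtlety is the boundary case $\lambda=1$, where the derivation divides by $\lambda-1$ and Lemma \ref{lem:Gegenbauer_identity} itself does not apply. Both sides of the corollary are continuous functions of $\lambda$ on $[1,\infty)$, so one may either pass to the limit $\lambda\to 1^+$, using the Proposition following Lemma \ref{lem:Gegenbauer_identity} as the replacement for that lemma, or verify the $\lambda=1$ case directly: the integral reduces to the one in equation~\eqref{eq:formula_Abramowitz} with $P_{n+1}$, and the identity $T_{n+1}(x)-T_{n+2}(x)=(1-x)\bigl[U_{n+1}(x)+U_n(x)\bigr]$ from the Proposition matches the right-hand side with $C^1_k=U_k$. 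I expect no genuine obstacle: the substance is entirely in the two cited lemmas, and the work consists of bookkeeping the $\Gamma$-factors and handling the $\lambda=1$ endpoint by a limiting argument.
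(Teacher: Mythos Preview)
Your proposal is correct and follows exactly the paper's own route: substitute $\lambda\mapsto\lambda-1$, $n\mapsto n+1$ in Lemma~\ref{lem:frac_integral_Gegenbauer} and then apply Lemma~\ref{lem:Gegenbauer_identity}. Your treatment of the boundary case $\lambda=1$ is more explicit than the paper's (which only alludes to it in the subsequent Remark identifying this case with~\eqref{eq:formula_Abramowitz} and~\eqref{eq:formula_Abramowitz2}), but the substance is the same.
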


\begin{proof} The result follows from
setting $\lambda-1$ and $n+1$ instead of
$\lambda$ and $n$, respectively in
Lemma~\ref{lem:frac_integral_Gegenbauer}
and applying
Lemma~\ref{lem:Gegenbauer_identity}.
\end{proof}

\begin{Remark} Formul{\ae} (\ref{eq:formula_Abramowitz})
and~(\ref{eq:formula_Abramowitz2}) are the special cases
for $\lambda=1$.
\end{Remark}

\begin{proof}[Proof of Theorem~\ref{thm:half_step_D_general_case}]
The Fourier--Gegenbauer coef\/f\/icients of the function $\CD_+^\lambda f$
are given by
\begin{gather*}
b_n =
\frac 1{h_n^{\lambda+\frac 12}}
\int_{-1}^1 \CD_+^\lambda f(t) C_n^{\lambda+\frac 12}(t)
\big(1-t^2\big)^\lambda dt,
\end{gather*}
where
\begin{gather*}
h_n^{\lambda+\frac 12} =
\frac{\pi\Gamma(2\lambda+n+1)}
 {2^{2\lambda}n! \left(\lambda+n+\frac 12\right)
 \Gamma^2\left(\lambda+\frac 12\right)}.
\end{gather*}
Therefore,
\begin{gather*}
h_n^{\lambda+\frac 12} b_n =
\int_{-1}^1 D_+^\lambda f(t) C_n^{\lambda+\frac 12}(t)
\big(1-t^2\big)^\lambda dt
+
\int_{-1}^1 D_-^\lambda f(t) C_n^{\lambda+\frac 12}(t)
\big(1-t^2\big)^\lambda dt.
\end{gather*}
Let us denote the f\/irst integral by $H_+$ and the second
by~$H_-$. From integration by parts it follows that
\begin{gather*}
H_+ =
\int_{-1}^1 (1+x) \frac d{dx}
\left\{ (1+x)^{-\lambda}
\int_{-1}^x (x-t)^{-\frac 12}(1+t)^{\lambda-\frac 12}
f(t) dt \right\} C_n^{\lambda+\frac 12}(x)
\big(1-x^2\big)^\lambda dx \\
\hphantom{H_+ }{} =
\left[ (1+x)^{-\lambda}
\int_{-1}^x (x-t)^{-\frac 12}(1+t)^{\lambda-\frac 12}
f(t) dt (1+x)\big(1-x^2\big)^\lambda C_n^{\lambda+\frac 12}(x)
\right]_{-1}^1 \\
\hphantom{H_+ = }{} -
\int_{-1}^1 (1+x)^{-\lambda}
\int_{-1}^x (x-t)^{-\frac 12}(1+t)^{\lambda-\frac 12}
f(t) dt
\frac d{dx}\big\{ (1+x)\big(1-x^2\big)^\lambda
C_n^{\lambda+\frac 12}(x)\big\} dx.
\end{gather*}
The constant term vanishes.
Applying Lemma~\ref{lem:deriv_involving_Q} we can
decompose $H_+$ into the sum of the two integrals
\begin{gather*}
I_1 =
-\int_{-1}^1 (1+x)^{-\lambda}
\int_{-1}^x (x-t)^{-\frac 12} (1+t)^{\lambda-\frac 12}
f(t) dt
\big(1-x^2\big)^\lambda C_n^{\lambda+\frac 12}(x) dx \\
\hphantom{I_1}{} =
-\int_{-1}^1 \int_t^1 (1-x)^\lambda (x-t)^{-\frac 12}
C_n^{\lambda+\frac 12}(x) dx
(1+t)^{\lambda-\frac 12} f(t) dt
\end{gather*}
and
\begin{gather*}
I_2 = (n+1)\frac{2\lambda+n}{2\lambda-1}
\int_{-1}^1 (1+x)^{-\lambda+1}
\int_{-1}^x (x-t)^{-\frac 12} (1+t)^{\lambda-\frac 12}
f(t) dt
\big(1-x^2\big)^{\lambda-1} C_{n+1}^{\lambda-\frac 12}(x) dx \\
\hphantom{I_2}{} =
(n+1)\frac{2\lambda+n}{2\lambda-1}
\int_{-1}^1 \int_t^1 (1-x)^{\lambda-1} (x-t)^{-\frac 12}
C_{n+1}^{\lambda-\frac 12}(x) dx
(1+t)^{\lambda-\frac 12} f(t) dt.
\end{gather*}
In an analogous way, we decompose~$H_-$ into a sum of the
integrals
\begin{gather*}
I_3 =
\int_{-1}^1 \int_{-1}^t (1+x)^\lambda (t-x)^{-\frac 12}
C_n^{\lambda+\frac 12}(x) dx
(1-t)^{\lambda-\frac 12} f(t) dt
\end{gather*}
and
\begin{gather*}
I_4 =
(n+1)\frac{2\lambda+n}{2\lambda-1}
\int_{-1}^1\int_{-1}^t
(1+x)^{\lambda-1}(t-x)^{-\frac 12}
C_{n+1}^{\lambda-\frac 12}(x) dx
(1-t)^{\lambda-\frac 12} f(t) dt.
\end{gather*}
The inner integrals in $I_1$ and $I_3$ can be computed using
Lemma~\ref{lem:frac_integral_Gegenbauer}, whereas the
corresponding formul{\ae} for $I_2$ and $I_4$ are stated in
Corollary~\ref{cor:frac_integral_Gegenbauer}.
Doing so, and using the def\/inition of the coef\/f\/icients
\begin{gather*}
a_n =
\frac 1{h_n^\lambda}
\int_{-1}^1 f(t) C_n^\lambda(t)
\big(1-t^2\big)^{\lambda-\frac 12} dt,
\qquad
\text{where}
\qquad
h_n^\lambda =
\frac{\pi \Gamma(2\lambda+n)}
 {2^{2\lambda-1}n! (\lambda+n)\Gamma^2(\lambda)},
\end{gather*}
we obtain that
\begin{gather*}
I_1 =
\frac{\pi^{\frac 32}}
 {2^{2\lambda}n!\Gamma(\lambda)
 \Gamma\left(\lambda+\frac 12\right)
 \left(n+\lambda+\frac 12\right)} \\
\hphantom{I_1 =}{}\times
\left[ \frac{\Gamma(2\lambda+n+1)}{\lambda+n+1} a_{n+1}
- (2\lambda+n)\frac{\Gamma(2\lambda+n)}{\lambda+n} a_n
\right], \\
I_2 = \frac{\pi^{\frac 32}}
 {2^{2\lambda}n!\Gamma(\lambda)
 \Gamma\left(\lambda+\frac 12\right)
 \left(n+\lambda+\frac 12\right)} \\
\hphantom{I_2 =}{}\times
\left[ (2\lambda+n)\frac{\Gamma(2\lambda+n+1)}
 {\lambda+n+1} a_{n+1}
+ (2\lambda+n)\frac{\Gamma(2\lambda+n)}{\lambda+n} a_n
\right], \\
I_3 = \frac{\pi^{\frac 32}}
 {2^{2\lambda}n!\Gamma(\lambda)
 \Gamma\left(\lambda+\frac 12\right)
 \left(n+\lambda+\frac 12\right)} \\
\hphantom{I_3 =}{}\times
\left[ (n+2\lambda) \frac{\Gamma(2\lambda+n)}
 {\lambda+n} a_n
+ \frac{\Gamma(2\lambda+n+1)}{\lambda+n+1} a_{n+1}
\right], \\
I_4 = \frac{\pi^{\frac 32}}
 {2^{2\lambda}n!\Gamma(\lambda)
 \Gamma\left(\lambda+\frac 12\right)
 \left(n+\lambda+\frac 12\right)} \\
\hphantom{I_4 =}{}\times
\left[ (2\lambda+n)\frac{\Gamma(2\lambda+n+1)}
 {\lambda+n+1} a_{n+1}
- (2\lambda+n)\frac{\Gamma(2\lambda+n)}
 {\lambda+n} a_n\right].
\end{gather*}
Therefore,
\begin{gather*}
H_+ + H_- =
\frac{\pi^{\frac 32}\Gamma(2\lambda+n+2)}
 {2^{2\lambda-1}n!\Gamma(\lambda)
 \Gamma\left(\lambda+\frac 12\right)
 \left(n+\lambda+\frac 12\right)(\lambda+n+1)}
a_{n+1},
\end{gather*}
from which it follows that
\begin{gather*}
b_n = \frac{2\sqrt\pi\Gamma\left(\lambda+\frac 12\right)}
 {\Gamma(\lambda)}
\frac{2\lambda+n+1}{\lambda+n+1} a_{n+1}.
\end{gather*}
This completes the proof of the part of the theorem concerning $\CD_+^\lambda f$. Again, the proof of the part concerning $\CD^{\lambda}_{-}f $ is similar and will therefore be omitted.
\end{proof}

\begin{proof}[Proof of Theorem~\ref{thm:dimension_hopping_5}]
From the continuity assumption we have that the Gegenbauer series of the functions $\CD^\lambda_\pm f$ are Abel summable. Since $f$ is positive def\/inite by hypothesis, Theorem~\ref{thm:half_step_D_general_case} shows that the Gegenbauer coef\/f\/icients of $\CD^\lambda_\pm f$ are non-negative. Hence the Gegenbauer series restricted to $x=1$
are series of non-negative terms. For such series of non-negative terms Abel summability
implies summability. Since $C_n^{\lambda+\frac 12}$ attains its maximum at the point 1 we can apply the Weierstra{\ss} M-test to show that the Gegenbauer series of $\CD^\lambda_\pm f$ converge uniformly on $[-1,1]$. Furthermore, the multipliers given in equations~(\ref{eq:multiplier_b_n}) and (\ref{eq:multiplier_c_n}) preserve the sign of the coef\/f\/icients. Therefore, $\CD_\pm^\lambda f\in\Lambda_{m+1}$ by
Theorem~\ref{thm:schoenberg}. The statement about strict positive-def\/initeness then follows from the same relation and the discussion on strict positive def\/initeness in
the paragraph following Theorem~\ref{thm:schoenberg}.
\end{proof}

\section[The action of $\CI^\lambda_\pm$ and $\CD^\lambda_\pm$ on Gegenbauer polynomials]{The action of $\boldsymbol{\CI^\lambda_\pm}$ and $\boldsymbol{\CD^\lambda_\pm}$ on Gegenbauer polynomials}
\label{sec:montee_and_descente_half_step}

The claim that the operators $\CI^\lambda_\pm$ and $\CD^\lambda_\pm$ map Gegenbauer polynomials onto Gegenbauer polynomials with a changed parameter is based upon contiguous relations for
 hypergeometric functions. We will thus f\/irst state a proposition showing that the images of $C_n^{\lambda+\frac 12}$ under the operators $I_\pm^\lambda$ are hypergeometric polynomials.

\begin{Proposition} \label{prop:Ioperators}
Let $\lambda\geq 0$, $n\in\N_0$ and $x\in[-1,1]$. Then
\begin{gather}I^\lambda_{+} C_n^{\lambda + \frac 12} (x) =
c_{n,\lambda}
\frac{\lambda + \frac 12}{\lambda +n + {\frac 12}} (1+x)\, \HypFunTO{-n}{n+2\lambda +1}{\lambda+\frac 12}{\frac{1-x}{2}}, \label{eq:Iplus_on_C}
\\
I^\lambda_{-} C_n^{\lambda + \frac 12} (x) =
c_{n,\lambda}
(1-x)\, \HypFunTO{-n}{n+2\lambda +1}{\lambda+\frac 32}{\frac{1-x}{2}},
\label{eq:Iminus_on_C}
\end{gather}
where
\begin{gather*}
c_{n,\lambda} =
\frac{\sqrt{\pi} (2\lambda+1)_n}{n!} \frac{\Gamma(\lambda+1)}{\Gamma(\lambda + \frac 32)}.
\end{gather*}
\end{Proposition}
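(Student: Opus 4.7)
The plan is to reduce both integrals in (\ref{eq:Iplus}) and (\ref{eq:Iminus}) to standard $[0,1]$-integrals via a single linear change of variables, insert the hypergeometric form of the Gegenbauer polynomial, and then evaluate the resulting integral by a classical Beta/Euler identity that collapses a $_3F_2$ to a $_2F_1$. The direction $I_-^\lambda$ will fall out immediately in the form stated in (\ref{eq:Iminus_on_C}); the direction $I_+^\lambda$ will require one additional transformation step in which the argument $(1+x)/2$ and the lower parameter $\lambda+\frac32$ are rewritten in terms of $(1-x)/2$ and $\lambda+\frac12$.

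For $I_-^\lambda$, substituting $\tau=1-(1-x)v$ on $v\in[0,1]$ gives $I_-^\lambda f(x)=(1-x)\int_0^1 v^\lambda(1-v)^{-1/2}f\bigl(1-(1-x)v\bigr)dv$. Writing $C_n^{\lambda+1/2}(\tau)=\frac{(2\lambda+1)_n}{n!}\,{}_2F_1\bigl(-n,n+2\lambda+1;\lambda+1;\frac{1-\tau}{2}\bigr)$ makes the inner argument equal to $\frac{1-x}{2}v$. Applying the classical integral identity
\begin{gather*}
\int_0^1 v^{c-1}(1-v)^{d-c-1}\,{}_2F_1(a,b;e;zv)\,dv = B(c,d-c)\,{}_3F_2\!\left[\begin{array}{c}a,b,c\\ e,d\end{array};z\right]
\end{gather*}
with $c=\lambda+1$, $d=\lambda+\frac32$, $e=\lambda+1$ produces a $_3F_2$ whose numerator parameter $\lambda+1$ cancels the lower parameter $\lambda+1$, collapsing it to $\HypFunTO{-n}{n+2\lambda+1}{\lambda+3/2}{\frac{1-x}{2}}$. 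Combining $B(\lambda+1,\tfrac12)=\sqrt\pi\,\Gamma(\lambda+1)/\Gamma(\lambda+\tfrac32)$ with $(2\lambda+1)_n/n!$ yields $c_{n,\lambda}$ exactly, giving (\ref{eq:Iminus_on_C}).

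For $I_+^\lambda$, the analogous substitution $\tau=-1+(1+x)v$ yields $I_+^\lambda f(x)=(1+x)\int_0^1 v^\lambda(1-v)^{-1/2}f(-1+(1+x)v)\,dv$. To align the hypergeometric argument with $(1+x)v/2$ I use the reflected form $C_n^{\lambda+1/2}(\tau)=(-1)^n\frac{(2\lambda+1)_n}{n!}\,{}_2F_1\bigl(-n,n+2\lambda+1;\lambda+1;\frac{1+\tau}{2}\bigr)$, obtained from the reflection formula (\ref{eq:reflection_formula}). The same integral identity then delivers $(-1)^n c_{n,\lambda}(1+x)\,\HypFunTO{-n}{n+2\lambda+1}{\lambda+3/2}{\frac{1+x}{2}}$, which is close to (\ref{eq:Iplus_on_C}) but in the ``wrong'' hypergeometric presentation.

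The final step, and the one I expect to be the main obstacle for bookkeeping, is to convert this presentation to the one stated in (\ref{eq:Iplus_on_C}) using the Jacobi polynomial symmetry $P_n^{(\alpha,\beta)}(-y)=(-1)^n P_n^{(\beta,\alpha)}(y)$ specialized to $\alpha=\lambda+\tfrac12$, $\beta=\lambda-\tfrac12$. Writing each $P_n^{(\alpha,\beta)}$ in hypergeometric form yields
\begin{gather*}
\HypFunTO{-n}{n+2\lambda+1}{\lambda+3/2}{\tfrac{1+x}{2}} = (-1)^n\,\frac{\binom{n+\lambda-1/2}{n}}{\binom{n+\lambda+1/2}{n}}\,\HypFunTO{-n}{n+2\lambda+1}{\lambda+1/2}{\tfrac{1-x}{2}},
\end{gather*}
and the gamma-function ratio simplifies to $(\lambda+\tfrac12)/(n+\lambda+\tfrac12)$. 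The two factors of $(-1)^n$ cancel, producing precisely the prefactor in (\ref{eq:Iplus_on_C}). The delicate point is tracking signs and matching the shifted parameters so that the cancellations are exact; once the Jacobi symmetry is set up with $\beta=\lambda-\tfrac12$, everything lines up cleanly, and no further contiguous relations are needed at this stage.
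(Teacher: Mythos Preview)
Your proposal is correct and follows essentially the same route as the paper. The paper packages the term-by-term beta integration as a separate lemma (Lemma~\ref{lem:Ipm_on_F}) rather than quoting the ${}_3F_2$ identity, but these are the same computation; and for the final argument-flip in the $I_+^\lambda$ case the paper invokes the Pfaff transformation~\eqref{eq:change_last_arg_hypergeometric}, which is exactly your Jacobi reflection symmetry $P_n^{(\alpha,\beta)}(-y)=(-1)^n P_n^{(\beta,\alpha)}(y)$ written in hypergeometric form.
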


Before proving the proposition let us state a technical lemma.
\begin{Lemma} \label{lem:Ipm_on_F}
Let $\lambda>-\frac 12$, $n\in\N_0$ and $x\in[-1,1]$. Then
\begin{gather*}
\left(I^\lambda_\pm \, \HypFunTO{-n}{n+2\lambda+1}{\lambda+1}
{\frac{1\pm \Cdot}{2}}\right)(x) \\
\qquad{}=
 \sqrt{\pi} \frac{\Gamma(\lambda+1)}{\Gamma(\lambda+\frac 32)} (1\pm x)
 \, \HypFunTO{-n}{n+2\lambda+1}{\lambda+ \frac 32}{\frac{1\pm x}{2} }.
 \end{gather*}
\end{Lemma}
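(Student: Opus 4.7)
The plan is to evaluate each integral directly by reducing it, via a single linear change of variables, to a classical Euler-type integral representation of ${}_2F_1$. I will carry out the ``$+$'' case in detail; the ``$-$'' case then follows either by the analogous substitution $\tau = 1-(1-x)s$ or, more economically, by the change of variables $\tau\mapsto -\tau$, which interchanges $I^\lambda_+$ with $I^\lambda_-$ and the arguments $(1+\tau)/2$ with $(1-\tau)/2$.

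For the ``$+$'' case I substitute $\tau = (1+x)s - 1$ in~\eqref{eq:Iplus}, mapping $[-1,x]$ onto $[0,1]$. The identities $x-\tau=(1+x)(1-s)$ and $1+\tau=(1+x)s$ cause all the explicit powers of $(1+x)$ coming from the weights $(x-\tau)^{-\half}(1+\tau)^\lambda\,d\tau$ and from the prefactor $(1+x)^{-\lambda+\half}$ to collapse to a single overall factor $(1+x)$, yielding
\[ \bigl(I^\lambda_+ g\bigr)(x) = (1+x)\int_0^1 s^\lambda (1-s)^{-\half}\,\HypFunTO{-n}{n+2\lambda+1}{\lambda+1}{\tfrac{1+x}{2}\,s}\,ds, \]
where $g(\tau)={}_2F_1\bigl(-n,n+2\lambda+1;\lambda+1;(1+\tau)/2\bigr)$.

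The key step is to recognise the remaining integral as an instance of the classical identity
\[ \int_0^1 s^{c-1}(1-s)^{d-c-1}\,\HypFunTO{a}{b}{c}{zs}\,ds = \frac{\Gamma(c)\Gamma(d-c)}{\Gamma(d)}\,\HypFunTO{a}{b}{d}{z}, \]
valid for $0 < \mathrm{Re}\,c < \mathrm{Re}\,d$, which follows at once from expanding the ${}_2F_1$ as a power series and evaluating each term with the beta integral (the interchange of sum and integral being trivially legitimate here since the hypergeometric function in question is a polynomial of degree~$n$ in~$s$). Taking $c=\lambda+1$, $d=\lambda+\frac 32$, and $z=(1+x)/2$ gives $\Gamma(d-c)=\Gamma(\half)=\sqrt{\pi}$ and reproduces exactly the right-hand side of the lemma. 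No real obstacle is anticipated: integrability of $s^\lambda(1-s)^{-\half}$ at the endpoints is guaranteed by the hypothesis $\lambda>-\half$, and every manipulation involves only finite sums.
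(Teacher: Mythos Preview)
Your proof is correct and is essentially the same argument as the paper's: the paper also expands the (polynomial) ${}_2F_1$ term by term and evaluates each resulting integral using the beta integral~\eqref{eq:beta_integral1}, which is exactly the justification you offer for the Euler-type identity you quote. The only difference is cosmetic --- you first normalise the integration interval to $[0,1]$ via $\tau=(1+x)s-1$ and then recognise the result as a known identity, whereas the paper performs the same term-by-term beta-integral computation directly in the $\tau$ variable.
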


\begin{proof} We will prove the statement in the `$+$' case (the proof of `$-$' case being
analogous):
\begin{gather*}
\left( I^\lambda_+ \, \HypFunTO{-n}{n+2\lambda+1}{\lambda+1}{\frac{1+\Cdot}{2} } \right)(x)\\
 \qquad{} = (1+x)^{-\lambda +\frac 12} \int_{-1}^x (x-\tau)^{- \half} (1+\tau)^\lambda \,
\HypFunTO{-n}{n+2\lambda +1}{\lambda+1}{\frac{1+\tau}{2} } d \tau\\
\qquad{} = \sum_{k=0}^n
 \frac{(-n)_k (n+2\lambda +1)_k}{ (\lambda+1)_k k! 2^k}
(1+x)^{-\lambda+\half}
 \int_{-1}^x (x-\tau)^{-\half} (1+\tau)^{\lambda+k} d \tau.
 \end{gather*}
Using the beta integral~\eqref{eq:beta_integral1} the expression above reduces to
 \begin{gather*}
 \sum_{k=0}^n \frac{(-n)_k (n+2\lambda+1)_k}{(\lambda+1)_k k! 2^k}
 \frac{\Gamma(\half) \Gamma( \lambda+k+1)}{\Gamma(\lambda+k+\frac 32)} (1+x)^{k+1}\\
\qquad{} = \sqrt{\pi} (1+x) \sum_{k=0}^n \frac{(-n)_k (n+2\lambda+1)_k \Gamma(\lambda+1)}{k! 2^k}
\frac{ (1+x)^k}{(\lambda+\frac 32)_k \Gamma(\lambda+\frac 32)}\\
 \qquad{} = \sqrt{\pi}\; \frac{\Gamma(\lambda+1)}{\Gamma(\lambda+\frac 32)}
(1+x)\, \HypFunTO{-n}{n+2\lambda +1}{\lambda+\frac 32 }{\frac{1+x}{2}}.\tag*{\qed}
\end{gather*}\renewcommand{\qed}{}
\end{proof}

The following proof of Proposition~\ref{prop:Ioperators} uses an identity due to Pfaf\/f (cf.~\cite[2.3.14]{An99})
\begin{gather}
\label{eq:change_last_arg_hypergeometric}
\HypFunTO{-m}{b}{c}{z} =
\frac{(c-b)_m}{(c)_m}\,
\HypFunTO{-m}{b}{b-c-m+1}{1-z},
\qquad m\in\N_0,
\end{gather}
which will occur frequently later on.

\begin{proof}[Proof of Proposition~\ref{prop:Ioperators}] From the representation of the Gegenbauer polynomials as hypergeometric
functions \cite[\href{http://dlmf.nist.gov/18.5.E9}{(18.5.9)}]{DLMF}, and the ref\/lection formula~(\ref{eq:reflection_formula}) we have
\begin{gather*}
I^\lambda_{+} C^{\lambda+ \frac 12}_n (x) = (-1)^n
\frac{\Gamma(n+2\lambda+1)}{n! \Gamma(2\lambda+1)}
\left( I^\lambda_+ \, \HypFunTO{-n}{n+2\lambda +1}{\lambda+1}{\frac{1+\Cdot}{2}}\right)(x).
\end{gather*}
Applying Lemma \ref{lem:Ipm_on_F}
\begin{gather}
I^\lambda_{+} C^{\lambda+ \frac 12}_n (x) = (-1)^n
\frac{\Gamma(n+2\lambda+1)}{n! \Gamma(2\lambda+1)}
 \sqrt{\pi} \frac{\Gamma(\lambda+1)}{\Gamma(\lambda+\frac 32)}
(1+x)\, \HypFunTO{-n}{n+2\lambda +1}{\lambda+\frac 32 }{\frac{1+x}{2}} \nonumber \\
\hphantom{I^\lambda_{+} C^{\lambda+ \frac 12}_n (x)}{}
 = \frac{\sqrt{\pi} (-1)^n (2\lambda+1)_n}{n!} \frac{\Gamma(\lambda+1)}{\Gamma(\lambda+ \frac 32)}
(1+x)\, \HypFunTO{-n}{n+2\lambda +1}{\lambda+\frac 32 }{\frac{1+x}{2}}. \label{eq:interform_Iplus}
\end{gather}
Now \eqref{eq:change_last_arg_hypergeometric} implies
\begin{gather} \label{eq:changed_hyper}
\HypFunTO{-n}{n+2\lambda +1}{\lambda+\frac 32 }{\frac{1+x}{2}}
=
\frac{(-\lambda -n +\frac 12)_n}{(\lambda+ \frac 32)_n}\,
 \HypFunTO{-n}{n+2\lambda +1}{\lambda+\frac 12 }{\frac{1-x}{2}}.
 \end{gather}
Substituting \eqref{eq:changed_hyper} into \eqref{eq:interform_Iplus} noting that $ (a-n)_n = (-1)^n (1-a)_n$ shows
\begin{gather*}
I^\lambda_{+} C^{\lambda+ \frac 12}_n (x) =
 \frac{\sqrt{\pi} (2\lambda+1)_n}{n!} \frac{\Gamma(\lambda+1)}{\Gamma(\lambda+ \frac 32)}
\frac{(\lambda +\frac 12)_n}{(\lambda +\frac 32)_n}(1+x)\,
 \HypFunTO{-n}{n+2\lambda +1}{\lambda+\frac 12 }{\frac{1-x}{2}}\\
\hphantom{I^\lambda_{+} C^{\lambda+ \frac 12}_n (x)}{}
= \frac{\sqrt{\pi} (2\lambda+1)_n}{n!} \frac{\Gamma(\lambda+1)}{\Gamma(\lambda + \frac 32)}
\frac{\lambda + \frac 12}{\lambda +n + {\frac 12}} (1+x)\,
\HypFunTO{-n}{n+2\lambda +1}{\lambda+\frac 12}{\frac{1-x}{2}}.
 \end{gather*}

The proof of equation~\eqref{eq:Iminus_on_C} is almost identical to that part of the proof of
equation~\eqref{eq:Iplus_on_C} up to equation~\eqref{eq:interform_Iplus}.
It will therefore be omitted.
\end{proof}

\begin{Theorem}\label{thm:action_on_polynomials}
Let $\lambda> 0$ and $n\in\N_0$. Then
\begin{gather}
\CI^\lambda_{+} C^{\lambda + \frac 12}_n (x) =
\frac{ \sqrt{\pi} \Gamma(\lambda)}{\Gamma({\lambda +\frac 12})}
\frac{ n+2\lambda }{ n+\lambda+{\frac 12}} C^\lambda_n (x), \qquad x\in[-1,1], \label{eq:curlyI_plus_on_C}\\
\CI^\lambda_{-} C^{\lambda + \frac 12}_n (x) =
\frac{\sqrt{\pi} \Gamma(\lambda)}{\Gamma({\lambda +\frac 12})}
\frac{n+1}{n+\lambda+{\frac 12}} C^\lambda_{n+1} (x), \qquad
x\in[-1,1]. \label{eq:curlyI_minus_on_C}
\end{gather}
\end{Theorem}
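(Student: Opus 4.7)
The plan is to combine the explicit hypergeometric formulas for $I^\lambda_\pm C_n^{\lambda+\frac12}$ from Proposition \ref{prop:Ioperators} with a pair of contiguous relations for the Gauss hypergeometric function. Setting $z=(1-x)/2$ so that $1+x=2(1-z)$ and $1-x=2z$, those formulas become
\begin{gather*}
I^\lambda_+ C_n^{\lambda+\frac 12}(x) = 2c_{n,\lambda}\,\frac{\lambda+\frac 12}{\lambda+n+\frac 12}(1-z)F_1(z),\qquad I^\lambda_- C_n^{\lambda+\frac 12}(x) = 2c_{n,\lambda}\,zF_2(z),
\end{gather*}
with $F_1(z)={}_2F_1(-n,n+2\lambda+1;\lambda+\frac 12;z)$ and $F_2(z)={}_2F_1(-n,n+2\lambda+1;\lambda+\frac 32;z)$. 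By the definition $\CI^\lambda_\pm = I^\lambda_+ \pm I^\lambda_-$, the problem reduces to computing the combinations $\frac{\lambda+\frac 12}{\lambda+n+\frac 12}(1-z)F_1(z) \pm zF_2(z)$.

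The crux of the proof is to establish, with $c=\lambda+\frac 12$, the two identities
\begin{gather*}
\frac{c}{c+n}(1-z)F_1(z) + zF_2(z) = \frac{c}{c+n}\,{}_2F_1(-n,\,n+2\lambda;\,c;\,z),\\
\frac{c}{c+n}(1-z)F_1(z) - zF_2(z) = \frac{c}{c+n}\,{}_2F_1(-(n+1),\,n+2\lambda+1;\,c;\,z).
\end{gather*}
I would verify each by equating the coefficients of $z^k$ on the two sides, using the Pochhammer identities $(c)_k=c\,(c+1)_{k-1}$, $(-n)_k=(-n)_{k-1}(k-1-n)$ and $(n+2\lambda+1)_k=(n+2\lambda+1)_{k-1}(n+2\lambda+k)$. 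In the `$+$' case the computation collapses via $(k-1-n)(n+2\lambda+k)+k(n-k+1)=(k-1-n)(n+2\lambda)$, while in the `$-$' case via the one-line factorisation $(n+2\lambda+k)[(k-1-n)-k]=-(n+1)(n+2\lambda+k)$.

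Once these contiguous identities are in hand, the standard representation $C^\lambda_m(x) = \frac{(2\lambda)_m}{m!}\,{}_2F_1(-m,m+2\lambda;\lambda+\frac 12;z)$ immediately identifies the right-hand sides with constant multiples of $C^\lambda_n(x)$ and $C^\lambda_{n+1}(x)$ respectively. Substituting the value of $c_{n,\lambda}$ from Proposition \ref{prop:Ioperators} and simplifying the resulting constants using $\Gamma(\lambda+1)=\lambda\Gamma(\lambda)$, $\Gamma(\lambda+\frac 32)=(\lambda+\frac 12)\Gamma(\lambda+\frac 12)$ and $(2\lambda+1)_n/(2\lambda)_n=(n+2\lambda)/(2\lambda)$ yields exactly the multipliers displayed in \eqref{eq:curlyI_plus_on_C} and \eqref{eq:curlyI_minus_on_C}. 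The main technical obstacle is thus the verification of the two contiguous relations; everything else is routine Pochhammer and $\Gamma$-function bookkeeping.
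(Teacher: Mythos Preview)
The proposal is correct and follows essentially the same route as the paper: both start from the hypergeometric forms in Proposition~\ref{prop:Ioperators} and then reduce the combinations $I^\lambda_+\pm I^\lambda_-$ via contiguous relations for ${}_2F_1$. The only difference is that the paper quotes the needed relations from the literature (Abramowitz--Stegun (15.2.25) and DLMF (15.5.16)), whereas you propose to verify those same identities directly by matching power-series coefficients; this is a slightly more self-contained but otherwise equivalent argument.
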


\begin{Remark} Taking the limit of the last two relations as $\lambda\rightarrow 0^+$ (see~(\ref{eq:limit_lambda_to_zero})) gives
\begin{gather*}
 \CI^0_+ P_n (x) = \frac{2}{n+\half} T_n (x), \qquad \text{and}\qquad
\CI^0_{-} P_n(x) = \frac{2}{n+\half} T_{n+1}(x), \qquad n \in\N_0.
\end{gather*}
\end{Remark}

\begin{proof}[Proof of Theorem~\ref{thm:action_on_polynomials}]
From~\eqref{eq:Iplus_on_C} and~\eqref{eq:Iminus_on_C} we obtain that
\begin{gather*}
\CI^\lambda_{+} C^{\lambda+\half}_n (x) = \big\{ I^\lambda_{+}+I^\lambda_{-}\big\} C^{\lambda+\half}_n(x)\\
\hphantom{\CI^\lambda_{+} C^{\lambda+\half}_n (x)}{}
 = \frac{2 \sqrt{\pi} (2\lambda+1)_n}{(\lambda+n+\half) n!} \frac{\Gamma(\lambda+1)}{\Gamma(\lambda+\frac 32)}
\left\{ \left(\lambda+\half\right) \left(\frac{1+x}{2}\right)
\HypFunTO{-n}{n+2\lambda +1}{\lambda+\frac 12}{\frac{1-x}{2}}\!\right.\\
 \left.
\hphantom{\CI^\lambda_{+} C^{\lambda+\half}_n (x)=}{}
 + \left(\lambda+n+\half\right) \left(\frac{1-x}{2}\right)
\HypFunTO{-n}{n+2\lambda +1}{\lambda+\frac 32}{\frac{1-x}{2}} \right\}.
\end{gather*}
An application of the contiguous relation \cite[(15.2.25)]{Abramowitz}
then shows
\begin{gather*}
\CI^\lambda_{+} C^{\lambda+\half}_n (x)
 = \frac{2 \sqrt{\pi} (2\lambda+1)_n}{(\lambda+n+\half) n!} \frac{\Gamma(\lambda+1)}{\Gamma(\lambda+\frac 32)} \left(\lambda+\half\right)
 \HypFunTO{-n}{n+2\lambda}{\lambda+\half }{\frac{1-x}{2}} \\
\hphantom{\CI^\lambda_{+} C^{\lambda+\half}_n (x)}{}
= \frac{2 \sqrt{\pi} (2\lambda+1)_n}{(\lambda+n+\half) n!} \frac{\Gamma(\lambda+1)}{\Gamma(\lambda+\half)} \frac{n!}{(2\lambda)_n} C^\lambda_n (x)= \frac{\sqrt{\pi} (2\lambda+n) \Gamma(\lambda)}{ (\lambda+n+\half)\Gamma(\lambda+\half)} C^\lambda_n(x),
\end{gather*}
 which is equation~\eqref{eq:curlyI_plus_on_C}.

The proof of equation~\eqref{eq:curlyI_minus_on_C} is analogous except that it uses the contiguous
relation \cite[\href{http://dlmf.nist.gov/15.5.E16}{(15.5.16)}]{DLMF}. Hence, it will be omitted.
\end{proof}

For the operators $\CD^\lambda_\pm$ we obtain the following result.
\begin{Theorem}\label{thm:action_of_derivative}
Let $\lambda>0$ and $n\in\N_0$. Then
\begin{gather}
\CD^\lambda_{+} C^{\lambda}_n (x) =
\frac{ \sqrt{\pi} \Gamma(\lambda +\half )}{\Gamma({\lambda })}
\frac{ 2(n+2\lambda) }{ n+\lambda} C^{\lambda+\half}_{n-1} (x), \qquad x\in[-1,1], \label{eq:curlyD_plus_on_C}\\
\CD^\lambda_{-} C^{\lambda}_n (x) =
\frac{\sqrt{\pi} \Gamma(\lambda +\half )}{\Gamma(\lambda )}
\frac{2n}{n+\lambda} C^{\lambda+\half}_n (x),
\qquad x\in[-1,1]. \label{eq:curlyD_minus_on_C}
\end{gather}
\end{Theorem}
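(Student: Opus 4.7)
The plan is to obtain Theorem~\ref{thm:action_of_derivative} as an immediate corollary of the multiplier relations in Theorem~\ref{thm:half_step_D_general_case}, applied to the special case $f = C^\lambda_n$. The Gegenbauer expansion of this $f$ is trivial: its only nonzero coefficient is $a_n = 1$, while $a_k = 0$ for $k\neq n$. Substituting into the formulae~\eqref{eq:multiplier_b_n} and~\eqref{eq:multiplier_c_n} should give expansions for $\CD^\lambda_\pm C^\lambda_n$ in which exactly one term survives: for $\CD^\lambda_+$, only the coefficient $b_{n-1}$ is nonzero, and for $\CD^\lambda_-$, only $c_n$ is nonzero. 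Evaluating these two expressions yields precisely \eqref{eq:curlyD_plus_on_C} and \eqref{eq:curlyD_minus_on_C}, with the natural convention $C^{\lambda+\frac 12}_{-1}\equiv 0$ accommodating the edge case $n=0$ in the $\CD^\lambda_+$ formula.

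The single hypothesis of Theorem~\ref{thm:half_step_D_general_case} that needs checking is that both $\CD^\lambda_\pm f$ belong to $C[-1,1]$ when $f = C^\lambda_n$. To do this, I would make the substitution $\tau = -1 + (1+x)s$ inside the definition of $D^\lambda_+ f$, which turns the inner integral into
\begin{gather*}
(1+x)^{-\lambda}\int_{-1}^x (x-\tau)^{-\half}(1+\tau)^{\lambda-\half} f(\tau)\,d\tau
= \int_0^1 (1-s)^{-\half}\, s^{\lambda-\half}\, f\bigl(-1+(1+x)s\bigr)\, ds.
\end{gather*}
For any polynomial $f$ the right-hand side is itself a polynomial in $x$, so $D^\lambda_+ f$ is a polynomial; the analogous substitution $\tau = 1 - (1-x)s$ shows the same for $D^\lambda_- f$. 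Hence $\CD^\lambda_\pm C^\lambda_n$ is a polynomial, a fortiori continuous on $[-1,1]$, and Theorem~\ref{thm:half_step_D_general_case} applies.

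There is no serious obstacle in this approach, since the multiplier formulas do all the work; the result is really a corollary. One could instead mimic the proof of Theorem~\ref{thm:action_on_polynomials}, writing $\CD^\lambda_\pm C^\lambda_n$ via the hypergeometric representation of $C^\lambda_n$, applying a fractional Riemann--Liouville type identity to reduce the integrals to $_2F_1$ values, and then collapsing the resulting $_2F_1$ through a contiguous relation (such as \cite[(15.2.25)]{Abramowitz} used in the $\CI^\lambda_+$ case and \cite[\href{http://dlmf.nist.gov/15.5.E16}{(15.5.16)}]{DLMF} used in the $\CI^\lambda_-$ case). This parallel derivation would be essentially the same computation already performed for Theorem~\ref{thm:action_on_polynomials}, so the corollary route is preferable for economy.
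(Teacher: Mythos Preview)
Your proposal is correct and takes a genuinely different route from the paper. The paper proves Theorem~\ref{thm:action_of_derivative} by a direct hypergeometric computation entirely parallel to its proof of Theorem~\ref{thm:action_on_polynomials}: a preparatory proposition expresses $D^\lambda_{\pm} C^\lambda_n$ as ${}_2F_1$ polynomials (via Lemma~\ref{lem:Ipm_on_F} and the derivative formula for ${}_2F_1$), and then the contiguous relations \cite[(15.2.25)]{Abramowitz} and \cite[(15.5.16)]{DLMF} collapse the sums $\CD^\lambda_{\pm} C^\lambda_n$ to single Gegenbauer polynomials. You instead read off the result from the multiplier identities in Theorem~\ref{thm:half_step_D_general_case} applied to the one-term expansion $f=C^\lambda_n$, after verifying (via the substitution $\tau=-1+(1+x)s$) that $\CD^\lambda_{\pm} C^\lambda_n$ is a polynomial and hence continuous. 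This is legitimate and non-circular: the paper's proof of Theorem~\ref{thm:half_step_D_general_case} invokes only Theorem~\ref{thm:action_on_polynomials} (through Lemma~\ref{lem:frac_integral_Gegenbauer}), never Theorem~\ref{thm:action_of_derivative}. Your argument is shorter and avoids repeating the hypergeometric manipulations; the paper's direct route has the virtue of making the $\CI^\lambda_\pm$ and $\CD^\lambda_\pm$ computations stand on the same footing without appealing to the more elaborate integration-by-parts machinery of Section~2.
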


\begin{Remark} Taking the limit of the last two relations as $\lambda \rightarrow 0^+$ (see (\ref{eq:limit_lambda_to_zero})) gives
\begin{gather*}
\CD^0_{+} T_n(x)= n\pi P_{n-1}(x)\qquad \text{and}\qquad \CD^0_{-} T_n(x)= n \pi P_{n}(x), \qquad n \in \NN_0.
\end{gather*}
\end{Remark}

Again, we f\/irst prove a preparatory proposition.

\begin{Proposition}
Let $\lambda >0$, $n\in\N_0$ and $x\in[-1,1]$. Then
\begin{gather}D^\lambda_{+} C_n^{\lambda} (x) =
c_{n,\lambda}
\frac{n(n+2\lambda) }{n+\lambda } \frac{1+x}{2}\,
\HypFunTO{-n+1}{n+2\lambda +1}{\lambda+1}{\frac{1-x}{2}}, \label{eq:Dplus_on_C} \\
D^\lambda_{-} C_n^{\lambda } (x) =
c_{n,\lambda}
\frac{n(n+2\lambda)}{\lambda+1}
\frac{1-x}{2} \, \HypFunTO{-n+1}{n+2\lambda +1}{\lambda+2}{\frac{1-x}{2}},
\label{eq:Dminus_on_C}
\end{gather}
where
\begin{gather*}
c_{n,\lambda} =
\frac{\sqrt{\pi} (2\lambda)_n}{n!} \frac{\Gamma(\lambda+\half)}{\Gamma(\lambda + 1)}.
\end{gather*}
\end{Proposition}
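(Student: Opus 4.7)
The strategy mirrors the proof of Proposition~\ref{prop:Ioperators}. My plan is to represent $C_n^\lambda$ as a hypergeometric polynomial, evaluate the fractional integral term by term using the beta integral~\eqref{eq:beta_integral1}, then differentiate and, in the $D^\lambda_+$ case, rewrite the result with argument $(1-x)/2$ via Pfaff's identity~\eqref{eq:change_last_arg_hypergeometric}.

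For the $D^\lambda_-$ case, the natural starting point is the standard representation (cf.~\cite[DLMF 18.5.9]{DLMF})
\begin{gather*}
C_n^\lambda(\tau) = \frac{(2\lambda)_n}{n!} \HypFunTO{-n}{n+2\lambda}{\lambda+\half}{\frac{1-\tau}{2}}.
\end{gather*}
Substituting into the inner integral of $D^\lambda_-$ and applying the beta integral (in the $\int_x^1$ version) termwise, the $(\lambda+\half)_k$ in numerator and denominator cancel, leaving
\begin{gather*}
(1-x)^{-\lambda}\int_x^1 (\tau-x)^{-\half}(1-\tau)^{\lambda-\half} C_n^\lambda(\tau)\,d\tau
= \frac{(2\lambda)_n\sqrt\pi}{n!}\frac{\Gamma(\lambda+\half)}{\Gamma(\lambda+1)} \HypFunTO{-n}{n+2\lambda}{\lambda+1}{\frac{1-x}{2}}.
\end{gather*}
Applying $(1-x)\frac{d}{dx}$ and using the standard derivative rule $\frac{d}{dz}{}_2F_1(a,b;c;z) = \frac{ab}{c}{}_2F_1(a+1,b+1;c+1;z)$ (the chain-rule factor $-\tfrac12$ combines with $-n$ to give a positive $n$) yields~\eqref{eq:Dminus_on_C} directly, with $c_{n,\lambda}$ as stated and no further transformation needed.

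The $D^\lambda_+$ case is more intricate because the target form has argument $(1-x)/2$ while the integral naturally produces $(1+x)/2$. I would begin with the reflected representation
\begin{gather*}
C_n^\lambda(x) = (-1)^n \frac{(2\lambda)_n}{n!} \HypFunTO{-n}{n+2\lambda}{\lambda+\half}{\frac{1+x}{2}},
\end{gather*}
obtained from the one above via~\eqref{eq:reflection_formula}. The same termwise beta-integral computation gives
\begin{gather*}
(1+x)^{-\lambda}\int_{-1}^x (x-\tau)^{-\half}(1+\tau)^{\lambda-\half} C_n^\lambda(\tau)\,d\tau
= (-1)^n\frac{(2\lambda)_n\sqrt\pi}{n!}\frac{\Gamma(\lambda+\half)}{\Gamma(\lambda+1)} \HypFunTO{-n}{n+2\lambda}{\lambda+1}{\frac{1+x}{2}}.
\end{gather*}
Differentiating with $(1+x)\frac{d}{dx}$ produces a ${}_2F_1$ at $(1+x)/2$ with lower parameter $\lambda+2$. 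Applying Pfaff's identity~\eqref{eq:change_last_arg_hypergeometric} with $m=n-1$, $b=n+2\lambda+1$, $c=\lambda+2$ converts this into a ${}_2F_1$ at $(1-x)/2$ with lower parameter $\lambda+1$, as required in~\eqref{eq:Dplus_on_C}.

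The main obstacle will be the bookkeeping in this last step: verifying that the Pochhammer ratio $(c-b)_{n-1}/(c)_{n-1} = (1-\lambda-n)_{n-1}/(\lambda+2)_{n-1}$ simplifies to $(-1)^{n-1}(\lambda+1)/(\lambda+n)$, and that the three sign contributions $(-1)^n$ (from the reflection), $(-n)$ (from differentiation) and $(-1)^{n-1}$ (from Pfaff) combine to the positive factor $n(n+2\lambda)/(n+\lambda)$ claimed in~\eqref{eq:Dplus_on_C}. Once these cancellations are carried out carefully, the constant $c_{n,\lambda}$ drops out of the accumulated $\Gamma$-factors automatically.
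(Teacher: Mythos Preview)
Your proposal is correct and follows essentially the same route as the paper: the termwise beta-integral evaluation you describe is exactly the content of Lemma~\ref{lem:Ipm_on_F} (applied with $\lambda$ shifted to $\lambda-\tfrac12$), and your subsequent differentiation plus application of Pfaff's identity~\eqref{eq:change_last_arg_hypergeometric} for the $D^\lambda_+$ case matches the paper's argument line by line. The only cosmetic difference is that the paper treats $D^\lambda_+$ first and omits $D^\lambda_-$ as ``almost identical'', whereas you dispatch $D^\lambda_-$ first as the simpler case requiring no Pfaff transformation.
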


\begin{proof}
Consider the f\/irst equation
 \begin{gather*}
\frac{1}{1+x} D^\lambda_+ C^\lambda_n(x) =
\frac{d}{dx} \left\{ (1+x)^{-\lambda} \int_{-1}^x (x-\tau)^{-\half} (1+\tau)^{\lambda -\half} C^\lambda_n(\tau) d\tau\right\}\\
= \frac{(-1)^n (2\lambda)_n}{n!} \frac{d}{dx} \left\{ (1+x)^{-\lambda} \int_{-1}^x (x-\tau)^{-\half} (1+\tau)^{\lambda-\half }\,
\HypFunTO{-n}{n+2\lambda}{\lambda+\half}{ \frac{1+\tau}{2} } d\tau \right\} \\
= \frac{(-1)^n (2\lambda)_n}{n!} \frac{d}{dx} \left\{ \frac{1}{1+x}I^{\lambda-{\frac 12}}_+ \left(
\HypFunTO{-n}{n+2\lambda}{\lambda+\half}{ \frac{1+\Cdot}{2} } \right)(x)
\right\}.
\end{gather*}
Applying Lemma~\ref{lem:Ipm_on_F}
\begin{gather*}
\frac{1}{1+x} D^\lambda_+ C^\lambda_n(x)
 = \frac{(-1)^n (2\lambda)_n}{n!}
 \sqrt{\pi} \frac{\Gamma(\lambda+\half)}{\Gamma(\lambda+1)} \frac{d}{dx}\,
\HypFunTO{-n}{n+2\lambda}{\lambda+1}{ \frac{1+x}{2} }.
\end{gather*}
The formula (cf.~\cite[\href{http://dlmf.nist.gov/15.5.E1}{(15.2.1)}]{DLMF})
\begin{gather*}
\frac{d}{dx} \HypFunTO{a}{b}{c}{ x }
= \frac{(a)_1(b)_1}{(c)_1} \, \HypFunTO{a+1}{b+1}{c+1}{ x }
\end{gather*}
then implies
\begin{gather}
D^\lambda_+ C^\lambda_n(x)
 = \frac{(-1)^n (2\lambda)_n}{n!}
 \sqrt{\pi} \frac{\Gamma(\lambda+\half)}{\Gamma(\lambda+1)}\frac{1+x}{2} \frac{(-n)(n+2\lambda)}{(\lambda+1)}\nonumber\\
 \hphantom{D^\lambda_+ C^\lambda_n(x)=}{}\times
\HypFunTO{-n+1}{n+2\lambda+1}{\lambda+2}{ \frac{1+x}{2} } \nonumber\\
\hphantom{D^\lambda_+ C^\lambda_n(x)}{}
= \frac{\sqrt{\pi} (2\lambda)_n \Gamma(\lambda+\half)}{n! \Gamma(\lambda+1)}
(-1)^{n+1}\frac{1+x}{2} \frac{(n)(n+2\lambda)}{(\lambda+1)}\nonumber\\
 \hphantom{D^\lambda_+ C^\lambda_n(x)=}{}\times
\HypFunTO{-n+1}{n+2\lambda+1}{\lambda+2}{ \frac{1+x}{2} }. \label{eq:Dlambda_plus_partway_on_F}
\end{gather}
Now from equation~\eqref{eq:change_last_arg_hypergeometric}
\begin{gather*}
\HypFunTO{-n+1}{n+2\lambda+1}{\lambda+2}{ \frac{1+x}{2} }
=\frac{(-1)^{n-1} (1+\lambda)}{n+\lambda}\,
\HypFunTO{-n+1}{n+2\lambda+1}{\lambda+1}{ \frac{1-x}{2} }.
\end{gather*}
Substituting into equation~\eqref{eq:Dlambda_plus_partway_on_F}
gives equation~\eqref{eq:Dplus_on_C}.

The proof of equation~\eqref{eq:Dminus_on_C} is almost identical to that of equation~\eqref{eq:Dplus_on_C}.
It will therefore be omitted.
\end{proof}

\begin{proof}[Proof of Theorem~\ref{thm:action_of_derivative}]
From \eqref{eq:Dplus_on_C} and \eqref{eq:Dminus_on_C}
\begin{gather*}
\CD^\lambda_+ C_n^\lambda(x) =
\frac{\sqrt{\pi} (2\lambda)_n}{n!} \frac{\Gamma(\lambda+\half)}{\Gamma(\lambda+1)} \frac{n(n+2\lambda)}{(n+\lambda)(\lambda+1)} \\
\hphantom{\CD^\lambda_+ C_n^\lambda(x) =}{}
 \times
\left\{
(\lambda+1)\left(\frac{1+x}{2}\right)
\HypFunTO{-n+1}{n+2\lambda+1}{\lambda+1}{ \frac{1-x}{2} }\right.\\
 \left.
\hphantom{\CD^\lambda_+ C_n^\lambda(x) =}{}
 +(n+\lambda) \left( \frac{1-x}{2}\right)
\HypFunTO{-n+1}{n+2\lambda+1}{\lambda+2}{ \frac{1-x}{2} }
 \right\}.
 \end{gather*}
An application of the contiguous relation~\cite[(15.2.25)]{Abramowitz} then shows
\begin{gather*}
\CD^\lambda_+ C_n^\lambda(x) =
\frac{\sqrt{\pi} (2\lambda)_n}{n!} \frac{\Gamma(\lambda+\half)}{\Gamma(\lambda+1)} \frac{n(n+2\lambda)}{(n+\lambda)}\, \HypFunTO{-n+1}{n+2\lambda}{\lambda+1}{ \frac{1-x}{2} } ,
\end{gather*}
and rewriting the hypergeometric function on the right as a Gegenbauer polynomial
gives~\eqref{eq:curlyD_plus_on_C}.

The proof of equation~\eqref{eq:curlyD_minus_on_C} is analogous except that it uses contiguous
relation \cite[\href{http://dlmf.nist.gov/15/5/E16}{(15.5.16)}]{DLMF}. Hence, it will be omitted.
\end{proof}

\subsection*{Acknowledgements}

The authors thank the editor and the referees for their helpful suggestions.
WzC acknowledges support from a University of Canterbury Visiting
Erskine Fellowship. RKB is grateful for the hospitality provided by the
Helmholtz Zentrum M\"{u}nchen.

\pdfbookmark[1]{References}{ref}
\LastPageEnding

\end{document}